\documentclass[12pt, a4paper]{amsart}

\usepackage{amssymb, amsmath}
\usepackage[all]{xy}
\usepackage[inline]{enumitem}
\usepackage{hyphenat}
\usepackage[colorlinks,linkcolor=blue,citecolor=blue,urlcolor=teal]{hyperref}
\usepackage{xcolor}
\usepackage{mathtools}
\usepackage{tikz-cd}
\usepackage[labelformat=empty]{caption}
 \usepackage[ left=3cm, right=3cm, top = 2.8cm, bottom = 2.8cm ]{geometry}

\makeatletter
\def\@tocline#1#2#3#4#5#6#7{\relax
  \ifnum #1>\c@tocdepth 
  \else
    \par \addpenalty\@secpenalty\addvspace{#2}%
    \begingroup \hyphenpenalty\@M
    \@ifempty{#4}{%
      \@tempdima\csname r@tocindent\number#1\endcsname\relax
    }{%
      \@tempdima#4\relax
    }%
    \parindent\z@ \leftskip#3\relax \advance\leftskip\@tempdima\relax
    \rightskip\@pnumwidth plus4em \parfillskip-\@pnumwidth
    #5\leavevmode\hskip-\@tempdima
      \ifcase #1
      \or\or \hskip 2em \or \hskip 2em \else \hskip 3em \fi%
      #6\nobreak\relax
    \dotfill\hbox to\@pnumwidth{\@tocpagenum{#7}}\par
    \nobreak
    \endgroup
  \fi}
\makeatother

\newcommand{\eq}[2]{\begin{equation}\label{#1}#2 \end{equation}}
\newcommand{\ml}[2]{\begin{multline}\label{#1}#2 \end{multline}}
\newcommand{\mlnl}[1]{\begin{multline*}#1 \end{multline*}}

\newcommand{\arir}{\ar@{^{(}->}}
\newcommand{\aril}{\ar@{_{(}->}}
\newcommand{\are}{\ar@{>>}}

\newcommand{\xr}[1] {\xrightarrow{#1}}


\newtheorem{lem}{Lemma}[section]
\newtheorem{thm}[lem]{Theorem}

\newtheorem{prop}[lem]{Proposition}

\newtheorem{cor}[lem]{Corollary}

\theoremstyle{definition}
\newtheorem{defn}[lem]{Definition}
\newtheorem{defn-prop}[lem]{Definition-Proposition}

\newtheorem{para}[lem]{}

\theoremstyle{remark}

\newtheorem{rmk}[lem]{Remark}

\newtheorem{exs-rmks}[lem]{Examples and Remarks}

\newtheorem{claim}{Claim}[lem]
\newtheorem*{claim*}{Claim}


\newtheorem*{ack}{Acknowledgments}

\newcounter{zaehler} 
\setcounter{zaehler}{1}

\setcounter{tocdepth}{1}
\numberwithin{equation}{lem}

\newcommand{\N}{\mathbb{N}}

\newcommand{\Z}{\mathbb{Z}}

\renewcommand{\P}{\mathbf{P}}

\newcommand{\F}{\mathbf{F}}

\newcommand{\sF}{\mathcal{F}}
\newcommand{\sG}{\mathcal{G}}
\newcommand{\sH}{\mathcal{H}}

\newcommand{\sO}{\mathcal{O}}

\newcommand{\sK}{\mathcal{K}}

\newcommand{\sV}{\mathcal{V}}

\newcommand{\fm}{\mathfrak{m}}

\newcommand{\fV}{\mathfrak{V}}

\newcommand{\tC}{{\widetilde{C}}}

\newcommand{\ul}[1]{{\underline{#1}}}

\newcommand{\Hom}{\operatorname{Hom}}

\newcommand{\Ker}{\operatorname{Ker}}

\renewcommand{\Im}{\operatorname{Im}}
\newcommand{\Coker}{\operatorname{Coker}}

\newcommand{\Spec}{\operatorname{Spec}}

\newcommand{\ab}{{\rm ab}}

\newcommand{\dlog}{\operatorname{dlog}}

\newcommand{\Proj}{\operatorname{Proj}}
\newcommand{\Sm}{\operatorname{\mathbf{Sm}}}

\newcommand{\tr}{{\operatorname{tr}}}

\newcommand{\red}{{\operatorname{red}}}

\newcommand{\Zar}{{\operatorname{Zar}}}
\newcommand{\Nis}{{\operatorname{Nis}}}

\newcommand{\inj}{\hookrightarrow}

\newcommand{\surj}{\rightarrow\!\!\!\!\!\rightarrow}

\newcommand{\id}{{\operatorname{id}}}

\newcommand{\ch}{{\operatorname{ch}}}

\newcommand{\CH}{{\operatorname{CH}}}

\newcommand{\gr}{{\operatorname{gr}}}

\newcommand{\na}{{\rm naive}}

\renewcommand{\epsilon}{\varepsilon}
\renewcommand{\div}{\operatorname{div}}

\def\rmapo#1{\overset{#1}{\longrightarrow}}

\def\piab#1{\pi^{\ab}_1(#1)}
\title[Cycle maps with modulus]{Cycle class maps for Chow groups of zero-cycles with modulus}
\author{Kay R\"ulling \and Shuji Saito}
 
\address{Bergische Universit\"at Wuppertal\\ Gau\ss str. 20, D-42119 Wuppertal, Germany}
\email{ruelling@uni-wuppertal.de}
 
\address{Graduate School of Mathematical Sciences, University of Tokyo, 3-8-1 Komaba, Tokyo 153-8941, Japan}
\email{sshuji@msb.biglobe.ne.jp}

\thanks{K.R.\ was supported by the DFG Heisenberg Grant RU 1412/2-2. 
S.S.\ is supported by the JSPS KAKENHI Grant (20H01791). }
\begin{document}

\begin{abstract}
For a smooth scheme $X$ of pure dimension $d$ over a field $k$ and an effective Cartier divisor $D\subset X$ whose support is a simple normal crossing divisor, we construct a cycle class map
\[ cyc_{X|D}: \CH_0(X|D) \to H^d_\Nis(X,K^M_d(\sO_X,I_D))\]
from the Chow group of zero-cycles with modulus to the cohomology of the relative Milnor $K$-sheaf.
\end{abstract}

\maketitle
\tableofcontents

\section{Introduction}

Let $U$ be a smooth variety of dimension $d$ over a finite field $k$ and $U\hookrightarrow X$ be an open immersion with $X$ normal and proper over $k$ such that $X\backslash U$ is the support of an effective Cartier divisor on $X$.
Let $\piab U$ be the abelianized fundamental group  of $U$.
Kato and Saito \cite{KS-GCFT} established class field theory for $U$ by using the reciprocity map
\[ \rho_U : \varprojlim_D H^d(X_{\Nis},K^M_d(\sO_X,I_D)) \to \piab U,\]
where the limits are over all effective Cartier divisors $D$ supported on 
$X\backslash U$, and $K^M_d(\sO_X,I_D)$ with $I_D\subset \sO_X$ the ideal sheaf of $D$ is the relative Milnor $K$-sheaf on the Nisnevich site over $X$ (see \eqref{para:various2}).
A refinement of this statement was given by Kerz-Saito \cite{Kerz-Saito}\footnote{They needed to assume
$\ch(k)\not=2$. This assumption was removed by Binda-Krishna-Saito \cite{Binda-Krishna-Saito} giving a simpler proof of the main result in \cite{Kerz-Saito} .}
using another reciprocity map 
\[ \phi_U : \varprojlim_D  \CH_0(X|D)  \to \piab U,\]
where $\CH_0(X|D) $ is the Chow group of zero-cycles with modulus.
By definition $\CH_0(X|D) $ is the quotient of the group $Z_0(U)$ of zero-cycles on $U$ by the subgroup generated by divisors of rational functions on curves in $U$ which satisfy a certain modulus conditions with respect to $D$ (see \ref{para:CHM}). 
On the other hand, the Gersten resolution of the Milnor $K$-sheaf yields a natural map 
\eq{intro:CHM3}{\theta :Z_0(U)\longrightarrow H^d(X_{\Nis},K^M_d(\sO_X,I_D)),}
which is surjective by \cite{KS-GCFT}, see \eqref{para:CHM3}.

With the help of ramification theory, the main result of \cite{Kerz-Saito} implies that $\phi_U$ induces a map for a fixed divisor $D$:
\[ \phi_{X|D} :  \CH_0(X|D)  \to \piab {X|D}\]
whose restriction to the degree-zero part is an isomorphism of finite abelian groups.
Here, $\piab {X|D}$ is the quotient of $\piab {U}$ which classifies abelian \'etale coverings of $U$ with ramification over $X\backslash U$ bounded by $D$,
see \cite{Kerz-Saito14}. 
A natural question is whether the latter fact holds with $\CH_0(X|D)$ replaced by
$H^d(X_{\Nis},K^M_d(\sO_X,I_D))$. A positive answer was given by Gupta-Krishna \cite{Gupta-Krishna} assuming $X$ is smooth and projective 
over the finite field $k$.
It implies that there is a natural isomorphism
\begin{equation*}\label{eq1}
 cyc_{X|D}: \CH_0(X|D) \cong H^d_\Nis(X,K^M_d(\sO_X,I_D)),\end{equation*}
which gives a factorization of $\theta$ from \eqref{intro:CHM3}, namely fits into the following commutative diagram
\eq{intro:CD}{
\xymatrix{
& \ar[ld]_{\pi} Z_0(U) \ar[rd]^{\theta} \\
\CH_0(X|D) \ar[rr]^-{cyc_{X|D}} && H^d_\Nis(X,K^M_d(\sO_X,I_D)) \\}}
where $\pi$ is the quotient map.
The question whether such a factorization  of $\theta$ via $\CH_0(X|D)$ exists makes sense in case 
$X$ is  smooth (not necessarily projective) over an arbitrary field $k$ and $D$ is an effective Cartier divisor on $X$. 
It was positively answered in the following cases in which $X$ is always smooth and $D$ is an effective Cartier divisor:
\begin{enumerate}[label=(\arabic*)]
\item\label{intro1} $X$ is a quasi-projective surface  over a field, see \cite[Theorem 1.2]{Krishna15};
\item\label{intro2} $X$ is affine  over an algebraically closed field, see \cite[Theorem 1.3(1)]{Gupta-Krishna20};
\item\label{intro3} $X$ is projective over an algebraically closed field and $D$ is integral, see \cite[Theorem 1.3(2)]{Gupta-Krishna20};
\end{enumerate}
In all  the above cases the cycle map $cyc_{X|D}$ is actually an isomorphism, by \cite{Binda-Krishna-Saito} in the case \ref{intro1} and by \cite{Gupta-Krishna20}
in the cases \ref{intro1}, \ref{intro2}. 
The authors were furthermore informed by Krishna that the existence of the cycle map also
follows from \cite{Krishna18} for  smooth projective varieties over algebraically closed fields if $D$ is reduced.

In this note we prove:

\begin{thm}[Theorem \ref{thm:cycle-map}]\label{intro:cycle-map} 
Let $X$ be a smooth scheme of pure dimension $d$ over a field $k$ and $D\subset X$ be an effective Cartier divisor
 whose support is a simple normal crossing divisor. There exists a map $cyc_{X|D}$ which makes \eqref{intro:CD} commutative.
\end{thm}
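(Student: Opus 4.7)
The plan is to establish the theorem by showing that $\theta$ annihilates the subgroup of rational equivalence relations defining $\CH_0(X|D)$; granted this, $\theta$ factors uniquely through $\pi$, the resulting factorization is $cyc_{X|D}$, and the commutativity of \eqref{intro:CD} is built in. Concretely, one must prove
\[
\theta\bigl(\phi_* \div(f)\bigr) = 0 \text{ in } H^d_\Nis(X, K^M_d(\sO_X, I_D))
\]
for every admissible datum $(\phi\colon \tC \to X, f \in k(\tC)^\times)$, i.e.\ every proper morphism from a normal curve such that $f \equiv 1 \pmod{\phi^* I_D}$ at each point of $\phi^{-1}(|D|)$.

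The first step is to describe $H^d_\Nis(X, K^M_d(\sO_X, I_D))$ as the cokernel of a Gersten-type differential
\[
\partial\colon \bigoplus_{y \in X^{(d-1)}} M_y \longrightarrow \bigoplus_{x \in X^{(d)}} \mathbb{Z},
\]
with $M_y \subset k(y)^\times$ a subgroup encoding the relative condition imposed by the pullback of $I_D$ to the local ring at $y$, and $\partial$ the tame-symbol / divisor map. The SNCD hypothesis on $|D|$ makes this description accessible, since it provides explicit local coordinates on $X$ in which $I_D$ is monomial, so that $K^M_d(\sO_X, I_D)$ admits a controlled Gersten resolution relating it term by term to Kerz's standard Gersten resolution of the improved Milnor $K$-sheaf. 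Under this description, $\theta$ is simply the projection of a closed point $x \in U$ to the basis vector of the $x$-summand followed by passage to the cokernel, and so it suffices to lift each $\phi_* \div(f)$ to an element of $\bigoplus_y M_y$ via $\partial$.

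The natural lift is built from the norm of $f$ along $\phi$: for each codimension-$(d-1)$ point $y \in X$ meeting $\phi(\tC)$, use the finite extension $\tC \to \overline{\{y\}}$ to form $N(f) \in k(y)^\times$. Functoriality of the tame symbol under finite pushforward then forces $\partial\bigl((N(f))_y\bigr) = \phi_* \div(f)$. The main obstacle is to check that $N(f)$ lies in the relative subgroup $M_y$ for every $y$ on $|D|$: this is where the modulus condition enters essentially, since the SNCD form of $I_D$ reduces the verification to a symbol calculation in a regular local ring in which the condition $f \equiv 1 \pmod{\phi^* I_D}$ on $\tC$ translates, via compatibility of the norm with the relative filtration, into the defining congruence for $M_y$. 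A secondary difficulty is that $\phi(\tC)$ may be singular or may meet $|D|$ non-transversally; this will be handled by a blow-up argument with centers in $|D|$ and by invoking birational invariance of $\CH_0(X|D)$ and the relevant excision for the target cohomology to reduce to the tame case where the local symbol computation can be carried out directly.
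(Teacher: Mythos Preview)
Your overall strategy---express $H^d_\Nis(X, V_{d,X|D})$ as a cokernel via the coniveau spectral sequence and lift $\nu_*\div_{\tilde C}(f)$ to the class of $f\in k(C)^\times$---matches the paper's. However, the execution diverges at two crucial points, and as written the argument has a genuine gap.

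First, the asserted Gersten-type description with target $\bigoplus_{x\in X^{(d)}}\Z$ is not correct. For $x\in D_{(0)}$ one computes (using the sequence $0\to V_{d,X|D}\to K^M_{d,X}\to i_*K^M_{d,D}$ and Gersten for $K^M$ on $X$ and on the smooth strata of $D$) that $H^d_x(X_\Nis, V_{d,X|D})$ sits in an exact sequence
\[
k(x)^\times \longrightarrow H^d_x(X_\Nis, V_{d,X|D})\longrightarrow \Z\longrightarrow 0,
\]
so the local term at such $x$ is not $\Z$. Consequently one cannot conclude $\partial_x(f)=0$ for $x\in D\cap C$ merely from the fact that $\div_{\tilde C}(f)$ is supported in $U$; one must prove this vanishing directly in the unknown group $H^d_x(X_\Nis, V_{d,X|D})$. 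The paper does this, for $C$ \emph{regular}, by writing the class of $f$ as the explicit \v{C}ech symbol $\pm\genfrac{[}{]}{0pt}{}{\{\tilde f, s_1,\ldots,s_{d-1}\}}{s_1,\ldots,s_{d-1}}$ with $s_1,\ldots,s_{d-1}$ a regular sequence generating the ideal of $C$ at $x$ and $\tilde f\in \sO_{X|D,x}^\times$, thereby lifting it to $H^{d-1}_{C_{(x)}}(\Spec\sO_{X,x}, V_{d,X|D})$. This step needs $C\hookrightarrow X$ to be a regular immersion at $x$.

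Second, your handling of singular $C$ does not go through. Since $\nu\colon\tilde C\to C$ is birational, your ``norm of $f$ along $\phi$'' is just $f$ itself and carries no new information; the obstacle is not membership in $M_y$ (trivial, since the generic point of $C$ lies in $U$) but the vanishing of $\partial_x(f)$ at singular points of $C$ on $D$. The proposed blow-up reduction would require a blow-up formula for $H^d_\Nis(-, V_{d,\,\cdot\,|\,\cdot})$ compatible with $\theta$ and with a controlled transform of the modulus; establishing this is essentially equivalent to the projective bundle formula for $V_{r,X|D}$, which is precisely the main technical input the paper develops in Theorem~\ref{thm:proj-pf-relK}. The paper then bypasses blow-ups altogether: it embeds $\tilde C$ into $\P^n_X$, applies the regular-curve argument there, and pushes down via the trace $\tr_\pi$ of Definition~\ref{defn:proj-tr}. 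Your proposal omits this ingredient entirely, and without it (or a genuine substitute) the singular-curve case remains open.
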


We will use the cycle map from the above theorem in \cite{RS-ZNP}, where it is important to allow an arbitrary base field. This is our main motivation.

The proof of Theorem \ref{intro:cycle-map} is different in spirit from the proofs  in \ref{intro1}-\ref{intro3} above and follows instead 
the same strategy as in \cite{RS18}, in which a factorization
\eq{intro:cycleU}{ \CH_0(X|D) \to H^d_\Nis(X,U_{d, X|D})}
is constructed, where $U_{d, X|D}$ is a variant of $K^M_d(\sO_X,I_D)$ (see \ref{para:KM}).
There is another variant $V_{d, X|D}$ and it is easy to prove that $K^M_d(\sO_X,I_D)$
and $V_{d, X|D}$ have the same cohomology group in the top degree $d$ (see \eqref{lem:variousK2}). Thus we are reduced to constructing a factorization
\eq{intro:cycleV}{\CH_0(X|D) \to H^d_\Nis(X,V_{d, X|D}).}
A key input in the construction of \eqref{intro:cycleU} is a projective bundle formula for the cohomology of $U_{d, X|D}$. 
In \S\ref{pbf}, we prove its variant for $V_{d, X|D}$. This is deduced from the original statement by comparing the
 cohomologies of $U_{d, X|D}$ and $V_{d, X|D}$.
To this end we define a decreasing filtration on $V_{d,X|D}$, which ends in $U_{d, X|D+D_{\red}}$ 
(see \ref{para:KM-SNCD}) and compute the  associated graded algebra.
Having this refined projective bundle formula at hand, we construct \eqref{intro:cycleV} in \S\ref{cycle} 
using the same argument as for the construction of \eqref{intro:cycleU} in \cite{RS18}.

\begin{ack}
The authors thank Amalendu Krishna for his comments, in particular for pointing out the references  \cite{Krishna15}, \cite{Krishna18}, and \cite{Gupta-Krishna20}. We thank the referee for his/her many comments 
which helped to improve the exposition.
\end{ack}

\section{The projective bundle formula for relative Milnor {$K$}-theory}\label{pbf}

In this section $k$ is a field of characteristic $p\ge 0$ and $\Sm$ denotes the category of smooth $k$-schemes.

\begin{para}\label{para:KM}
Let $X$ be a $k$-scheme. We denote by $K^M_{r, X}$ ($r \ge 0$) the Nisnevich sheafification 
of the improved Milnor K-theory from \cite{Kerz}. Let $D\subset X$ be a closed subscheme  and denote by 
$j: U:=X\setminus D\inj  X$  (resp. $i:D\inj  X$) the corresponding open (resp. closed) immersion.
Set 
\[\sO_{X|D}^\times:= \Ker(\sO_X^\times\to i_*\sO_D^\times).\]
We consider the following Nisnevich sheaves for $r\ge 1$
\[U_{r, X|D}:=\Im (\sO_{X|D}^\times\otimes_{\Z} j_*K^M_{r-1, U}\to j_*K^M_{r,U}),\quad
V_{r,X|D}:=\Im(\sO_{X|D}^\times\otimes_{\Z}K^M_{r-1, X}\to K^M_{r,X}).\]
Note that 
\begin{enumerate}[label=(\arabic*)]
\item $U_{1, X|D}=V_{1, X|D}=\sO_{X|D}^\times$ and 
the restrictions $K^M_{s,X}\to j_*K^M_{s,U}$, $s=r-1, r$, induce
 a natural map $V_{r, X|D} \to U_{r, X|D}$, which
is injective if $X$ is regular (by the Gersten resolution, see \cite[Proposition 10(8)]{Kerz});
\item\label{para:KM2} if $k$ is an infinite field, then $V_{r,X|D}$ is equal to the Nisnevich sheaf
         $\Ker(K^M_{r, X}\to K^M_{r, D})$ considered in \cite[(1.3)]{KS-GCFT}
         (this follows from \cite[Lemma 1.3.1]{KS-GCFT} and \cite[Proposition 10(5)]{Kerz});
\item the sheaf $U_{r, X|D}$ (for $D$ an effective Cartier divisor) 
    is equal to the Nisnevich sheaf $\sK^M_{r, X|D, \Nis}$ considered in \cite[2.3]{RS18}.
\end{enumerate}
\end{para}



\begin{para}\label{para:Cartier}
Let $X\in \Sm$. 
Denote by $\Omega^i_X$, $i\ge 0$, the Nisnevich sheaf  of absolute differential $i$-forms on $X$ and
set 
\[B_0\Omega^i_X=0, \quad B_1\Omega^i_X=d\Omega^{i-1}_X, \quad 
Z_1\Omega^i_X=\Ker(d:\Omega^i_X\to \Omega^{i+1}_X), \quad Z_0\Omega^i_X=\Omega^i_X.\]
Recall that for $p>0$ we have the  inverse Cartier operator $C^{-1}: \Omega^i_X\to Z_1\Omega^i_X/B_1\Omega^i_X$ at disposal.
It is an isomorphism of sheaves of abelian groups given by
\[C^{-1}(a\dlog b_1\cdots\dlog b_i)=a^p\dlog b_1\cdots\dlog b_i \quad \text{mod }B_1\Omega^i_X,\]
for local sections $a\in \sO_X$, $b_1,\ldots, b_i\in \sO_{X}^\times$. 
For $s\ge 1$, the subsheaves $B_s\Omega^i_X$ and  $Z_s\Omega^i_X$ of $\Omega^i_X$ are recursively defined by 
\[C^{-1}: B_{s-1}\Omega^i_X\xr{\simeq}\frac{B_s\Omega^i_X}{B_1\Omega^i_X},\quad 
C^{-1}: Z_{s-1}\Omega^i_X\xr{\simeq}\frac{Z_s\Omega^i_X}{B_1\Omega^i_X}.\]
In case no confusion can arise  we simply write $B_s$ and $Z_s$ instead of $B_s\Omega^i_X$ and  $Z_s\Omega^i_X$, respectively.
\end{para}

\begin{para}\label{para:local-gr-comp}
Let $X\in \Sm$ and let $D=\div(t)$ be a smooth integral and  principal divisor on $X$.
For $m\ge 1$ and $r\ge 1$ we set 
\[U_{r,m}= U_{r, X|m D},\quad V_{r,m}=V_{r, X|m D}.\]
If $p>0$, write $m=p^s m'$ with $(p,m')=1$ and $s\ge 0$; if $p=0$ we set $s=0$ and $m':=m$.
Set 
\[^mG^r:= 
{\rm Coker}\left(\Omega^{r-2}_D\to \frac{\Omega^{r-1}_D}{B_s}\oplus\frac{\Omega^{r-2}_D}{B_s}, \quad
\alpha\mapsto \big(C^{-s}(d\alpha), (-1)^r m' C^{-s}\alpha\big) \right),\]
where $C^{-s}$ is the $s$-fold iterated inverse Cartier operator, for $s\ge 1$,  and $C^{-0}=\id$.
By \cite[Remark (4.8)]{BK} 
we have an isomorphism
\eq{para:local-gr-comp1}{
^mG^r\xr{\simeq} \frac{U_{r,m}}{U_{r, m+1}}}
given by 
\[\left(a\cdot \dlog\,\ul{b},\, e\cdot \dlog\, \ul{f} \right)\mapsto 
\{1+t^m\tilde{a}, \ul{\tilde{b}}\} + \{1+t^m \tilde{e}, \ul{\tilde{f}}, t\}, \]
where $a, e\in \sO_D$,  $\ul{b}=(b_1,\ldots, b_{r-1})$, and $\ul{f}=(f_1,\ldots, f_{r-2})$, 
$b_i, f_j\in \sO_{D}^\times$ are local sections and  $\tilde{a}$, $\tilde{e}$, $\tilde{b}_i$, $\tilde{f}_j$ 
are  lifts to $\sO_X$. See also \cite[Proposition 2.15, Theorem 2.19]{RS18}, in particular for the sheaf theoretic statement considered here and the case $s=0$ .
\end{para}

\begin{lem}\label{lem-sesUV}
Let the assumption and notation be as in \ref{para:local-gr-comp}.
There is an isomorphism of exact sequences of sheaves on $D$
\eq{lem-sesUV1}{\xymatrix{
0\ar[r] & \frac{V_{r,m}}{U_{r,m+1}}\ar[r] & \frac{U_{r,m}}{U_{r, m+1}}\ar[r] & \frac{U_{r,m}}{V_{r,m}}\ar[r] & 0\\
0\ar[r]&\frac{\Omega^{r-1}_D}{B_s\Omega^{r-1}_D}\ar[u]^{\simeq}\ar[r] & 
           ^mG^r\ar[u]^{\simeq}_{\eqref{para:local-gr-comp1} }\ar[r] &
           \frac{\Omega^{r-2}_D}{Z_s\Omega^{r-2}_D}\ar[u]^{\simeq}\ar[r] & 0.
}}
In particular in characteristic $0$, we have $U_{r,m}=V_{r,m}$.
\end{lem}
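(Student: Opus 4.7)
The strategy is to transport the two-summand direct sum decomposition of $^mG^r$ through the isomorphism \eqref{para:local-gr-comp1}. I would first check exactness of the bottom row of \eqref{lem-sesUV1}, then identify the image of $V_{r,m}$ under the projection $U_{r,m}\twoheadrightarrow U_{r,m}/U_{r,m+1}\simeq {}^mG^r$ with the first summand $\Omega^{r-1}_D/B_s$, and finally deduce the top row and the right vertical isomorphism.

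For the bottom row, iterating the defining isomorphism of the inverse Cartier operator $C^{-1}$ gives $C^{-s}\colon\Omega^{r-2}_D\iso Z_s\Omega^{r-2}_D/B_s\Omega^{r-2}_D$. Since $m'\in\Z$ is coprime to $p$, it is a unit in $\sO_D$, so the image of $\alpha\mapsto(-1)^r m'C^{-s}\alpha$ is precisely $Z_s/B_s\subset\Omega^{r-2}_D/B_s$; projecting onto the second coordinate therefore gives a surjection $^mG^r\twoheadrightarrow\Omega^{r-2}_D/Z_s$, while $\omega\mapsto(\omega,0)$ yields an injection $\Omega^{r-1}_D/B_s\hookrightarrow{}^mG^r$ onto its kernel (any relation killing $(\omega,0)$ forces $\alpha=0$ by injectivity of $C^{-s}$).

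For the left vertical, I would invoke that Nisnevich-locally the improved Milnor K-sheaf $K^M_{r-1,X}$ is generated by symbols of units, so $V_{r,m}$ is locally generated by symbols $\{1+t^m\tilde a,\tilde b_1,\ldots,\tilde b_{r-1}\}$ with $\tilde a\in\sO_X$ and $\tilde b_j\in\sO_X^\times$. By the formula \eqref{para:local-gr-comp1}, such a generator is the image of the first-summand element $(a\dlog\ul b,0)\in {}^mG^r$, whereas the second-summand generators $\{1+t^m\tilde e,\ul{\tilde f},t\}$ involve the non-unit $t$ and therefore do not appear in $V_{r,m}$. Hence the image of $V_{r,m}$ in $U_{r,m}/U_{r,m+1}$ coincides with $\Omega^{r-1}_D/B_s\subset{}^mG^r$; this yields the left vertical isomorphism, and the right vertical then follows from the bottom SES via a diagram chase.

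The remaining point is the inclusion $U_{r,m+1}\subset V_{r,m}$, needed in order that the top row be literally a SES as written. Reducing as above, it suffices to show that local symbols $\{1+t^{m+1}a,u_1,\ldots,u_{r-2},t\}$ with $u_i\in\sO_X^\times$ lie in $V_{r,m}$. The Steinberg relation $\{1+t^{m+1}a,-t^{m+1}a\}=0$ (after locally reducing to $a\in\sO_X^\times$) gives
\[(m+1)\,\{1+t^{m+1}a,\ul u,t\} = -\{1+t^{m+1}a,\ul u,-1\}-\{1+t^{m+1}a,\ul u,a\}\in V_{r,m+1}\subset V_{r,m},\]
so when $p\nmid(m+1)$ one divides by $m+1$ and concludes. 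The main obstacle is the wild case $p\mid(m+1)$, where this identity only gives the inclusion up to a factor divisible by $p$; to handle it, one combines the naive Steinberg argument with descending induction over the filtration levels, exploiting the completeness $\bigcap_M U_{r,m+M}=0$ together with the fact that not all consecutive integers $m+1,m+2,\ldots$ can be divisible by $p$. The ``in particular'' assertion in characteristic zero is then immediate, since $s=0$ forces $Z_0=\Omega^{r-2}_D$ and the right column vanishes, hence $U_{r,m}=V_{r,m}$.
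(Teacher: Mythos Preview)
Your overall strategy coincides with the paper's: identify the image of $V_{r,m}$ in $U_{r,m}/U_{r,m+1}\cong{}^mG^r$ with the first summand $\Omega^{r-1}_D/B_s$ and then verify exactness of the bottom row. Your argument for the bottom row is in fact cleaner than the paper's. The paper passes through the de Rham--Witt complex, rewriting $B_s=F^{s-1}dW_s\Omega^{r-1}_D$ and $Z_s=F^sW_{s+1}\Omega^{r-1}_D$ and using the relations $FdV=d$, $FV=p$ to check exactness by an explicit computation; your direct use of the fact that the iterated inverse Cartier operator $C^{-s}\colon\Omega^{r-2}_D\to \Omega^{r-2}_D/B_s$ is injective with image $Z_s/B_s$ avoids this detour entirely.

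The genuine gap is in your treatment of the inclusion $U_{r,m+1}\subset V_{r,m}$. The step ``divide by $m+1$'' is not legitimate: from $(m+1)x\in V_{r,m+1}$ you cannot conclude $x\in V_{r,m}$, since you have not shown that the relevant quotient has no prime-to-$p$ torsion (and this is essentially circular with what you are trying to prove). Your fallback via descending induction and ``completeness $\bigcap_M U_{r,m+M}=0$'' is also problematic: this intersection need not vanish over a non-complete local ring, and in any case the induction has no base. The paper does not argue this inclusion inside the proof of the lemma either, but it follows at once from the explicit $K^M_2$-identity recorded as Lemma~\ref{lem:lcI}\ref{lem:lcI2}: taking $c=a$, $s=t$, and replacing $t$ there by $t^m$ gives
\[
\{1+at^{m+1},\,t\}=\Bigl\{1+\tfrac{(t-1)a}{1+at^m}\,t^m,\ 1-\tfrac{1+at^m}{1+at^{m+1}}\,t\Bigr\},
\]
whose right-hand side visibly lies in $V_{2,m}$ (the first entry is in $1+t^m\sO_X$, the second in $\sO_X^\times$). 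This single identity settles all $m$ uniformly, including the wild case.
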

\begin{proof}
Note that by the definition of \eqref{para:local-gr-comp1}
the composition 
\[\Omega^{r-1}_D/B_s\to {^mG^r}\to U_{r,m}/U_{r, m+1} \]
factors via a surjection
\[\Omega^{r-1}_D/B_s\surj V_{r,m}/U_{r,m+1}, \quad a\cdot \dlog\ul{b}\mapsto \{1+t^m\tilde{a},\ul{\tilde{b}}\}.\]
Therefore it suffices to show that the lower sequence is exact.
This is immediate for $s=0$. We assume $s\ge 1$ (hence $p>0$).
By \cite[I, Proposition 3.11]{Il} we have  
\[B_{s}\Omega^{r-1}_D = 
F^{s-1}dW_s\Omega^{r-1}_D,\quad  Z_s\Omega^{r-1}_{D}= F^s W_{s+1}\Omega^{r-1}_D,\]
where $W_n\Omega^\bullet_D$ denotes the de Rham-Witt complex from \cite{Il} and $F$ is the Frobenius on it. 
Using the relation $FdV=d$ and $FV=VF=p$, where $V$ is the Verschiebung, we find in $\Omega^{r-1}_D\oplus \Omega^{r-2}_D$
\mlnl{\Big(F^sd\alpha_0 +F^{s-1}d\gamma, (-1)^r m'F^s(\alpha_0)+F^{s-1}d\beta\Big)\\
= \Big(F^s d\alpha, (-1)^r m' F^s(\alpha)+ F^{s-1}d\beta\Big), \quad \text{with }\alpha:=\alpha_0+V(\gamma),}
where $\alpha_0\in W_{s+1}\Omega^{r-2}_D$, $\gamma\in W_s\Omega^{r-2}_D$, and $\beta\in W_s\Omega^{r-3}_D$.
Using that $F^s: W_{s+1}\Omega^{i}_D\to \Omega^{i}_D$ lifts the iterated inverse Cartier operator 
$C^{-s}: \Omega^{i}_D\to \Omega^i_D/B_s$ we see that
the lower row in \eqref{lem-sesUV1} becomes
\[\frac{\Omega^{r-1}_D}{\{F^{s-1}d\gamma\}}\xr{\varphi} 
\frac{\Omega^{r-1}_D\oplus \Omega^{r-2}_D}{\{(F^s d\alpha, 
                                                         (-1)^r m' F^s(\alpha) +F^{s-1}d\beta)\}}\xr{\psi}
\frac{\Omega^{r-2}_D}{\{F^s(\delta)\}},
\]
where,
\[\varphi(x)= (x, 0), \quad \psi(x,y)=y.\]
Note that $\varphi$  clearly surjects onto the kernel of $\psi$.
We show that $\varphi$ is injective.  
Indeed, $\varphi(x)=0$ implies that there exist $\alpha$, $\beta$ with 
$x=F^sd\alpha$ and 
$F^s\alpha=F^{s-1}d\beta$.
We obtain $\alpha-dV\beta\in \Ker(F^s)$. By \cite[I, (3.21.1.2)]{Il},  we find a $\gamma$ such that 
$\alpha- dV\beta = V\gamma$.
Thus
\[x= F^sd\alpha= F^sdV(\gamma)+ F^sddV(\beta)= F^{s-1}d\gamma.\]
Therefore  the lower sequence is exact.
\end{proof}

The following  lemma is direct to check (see \cite[Lemma 2.7]{RS18}).
\begin{lem}\label{lem:lcI}
Let $R$ be a local integral domain with maximal ideal $\fm$ and fraction field $K$.
Let $a,b,c\in R$ and $s,t\in \fm$. The following equalities hold in $K^M_2(K)$
\begin{enumerate}[label=(\arabic*)]
\item\label{lem:lcI1} $\{1+as, 1+bt\}=-\{1+\frac{ab}{1+as}st, -as(1+bt)\}$;
\item\label{lem:lcI2} $\{1+\frac{s-1}{1+ct}ct, 1-\frac{1+ct}{1+cst}s\}=\{1+cst,s\}$
\end{enumerate}
\end{lem}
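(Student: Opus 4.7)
The plan is to verify both identities by formal manipulation in $K^M_2(K)$, using only bilinearity of the symbol, the Steinberg relation $\{x,1-x\}=0$, and its consequence $\{x,-x\}=0$ (equivalently, $\{x,x\}=-\{x,-1\}$, and skew-symmetry $\{x,y\}=-\{y,x\}$). In each case the content of the identity is captured by a single algebraic rearrangement that produces a Steinberg pair, after which everything reduces to bookkeeping.

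For \ref{lem:lcI1}, I would set $u:=1+as$, $v:=1+bt$, and $w:=(u-1)v=asv$. A direct rewriting gives
\[
1+\tfrac{ab}{1+as}st \;=\; \tfrac{u+(u-1)(v-1)}{u} \;=\; \tfrac{1+w}{u}, \qquad -as(1+bt)\;=\;-w,
\]
so the right-hand side of \ref{lem:lcI1} becomes $-\{(1+w)/u,\,-w\}$. Expanding by bilinearity and invoking Steinberg for $\{1+w,-w\}=0$ (since $(1+w)+(-w)=1$) reduces this to $\{u,-w\}$. Now $\{u,-w\}=\{u,-(u-1)\}+\{u,v\}=\{u,1-u\}+\{u,v\}$, and one more application of Steinberg yields $\{u,v\}$, which is the left-hand side.

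For \ref{lem:lcI2}, I would first clear denominators to obtain
\[
1+\tfrac{s-1}{1+ct}ct \;=\; \tfrac{1+cst}{1+ct}, \qquad 1-\tfrac{1+ct}{1+cst}s\;=\;\tfrac{1-s}{1+cst}.
\]
The key observation is the algebraic identity $(1+ct)s+(1-s)=1+cst$. Setting $X:=1+cst$ and $Y:=1+ct$, this reads $\tfrac{Ys}{X}+\tfrac{1-s}{X}=1$, so Steinberg gives $\{\tfrac{Ys}{X},\tfrac{1-s}{X}\}=0$. Expanding this by bilinearity, using $\{s,1-s\}=0$ and $\{X,X\}=-\{X,-1\}$ to simplify, produces
\[
\{Y,\tfrac{1-s}{X}\} \;=\; \{X,s-1\}-\{X,s\}.
\]
Substituting this into $\{X/Y,\,(1-s)/X\}=\{X,(1-s)/X\}-\{Y,(1-s)/X\}=\{X,s-1\}-\{Y,(1-s)/X\}$ yields $\{X,s\}$, as required.

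No serious obstacle is expected: each identity reduces to a handful of bilinearity expansions and two or three Steinberg applications. The only creative step is spotting the hidden Steinberg pair in each case—straightforward for \ref{lem:lcI1} via $(1+w)+(-w)=1$, and most transparent for \ref{lem:lcI2} via the identity $Ys+(1-s)=X$.
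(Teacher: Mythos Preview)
Your proof is correct: both identities reduce, exactly as you indicate, to a single Steinberg relation after an algebraic rewriting, plus routine bilinearity and the standard consequences $\{x,-x\}=0$ and $\{x,y\}=-\{y,x\}$. The paper does not give a proof either, calling the lemma ``direct to check'' with a reference to \cite[Lemma 2.7]{RS18}; your verification is precisely such a direct check.
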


\begin{para}\label{para:KM-SNCD}
Let $X\in \Sm$  and let $D=\sum_{j=1}^n m_j D_j$ be an effective divisor, such that 
$|D|=\sum_{j=1}^n D_j$ is a simple normal crossing divisor with smooth divisors $D_j$ 
\footnote{We do not assume the  $D_j$ to be connected to include the case where $D=u^*D_0$ for some \'etale map $u:X\to X_0$ and effective divisor $D_0$  on $X_0$ with SNCD support.}
  and $m_j\ge 1$.  For $i\ge 0$  we set 
\[E_i:= D_{i+1}+\ldots+D_n,\]
in particular $E_n=0$,
and denote by $j_i: U_i:=X\setminus|E_i|\inj X$ the open immersion
(with the notation from \ref{para:KM} we have $j=j_0: U=U_0\inj X$).
For $r\ge 1$  we define recursively
\[V_r^{-1}:=0 \quad \text{and}\quad 
V_r^i:=V^{i-1}_r+ 
\Im\left(\sO_{X|D+E_i}^\times\otimes_\Z j_{i*}K^M_{r-1, U_i}\to j_{*}K^M_{r,U}\right), \quad i\ge 0.\]
Since the restriction map $K^M_{r,X}\to j_{*}K^M_{r,U}$ is injective, by \cite[Proposition 10(8)]{Kerz},
 we have $V^i_r\subset V_{r, X|D}$, for all $i$, by Lemma \ref{lem:lcI}\ref{lem:lcI2}.
 We  obtain a filtration
\[U_{r, X|D+E_0}= V_r^0\subset V_r^1\subset\ldots \subset V_r^n= V_{r, X|D}.\]

\end{para}

\begin{para}\label{para:KM-SNCDp}
Let  $r\ge 1$. Let $X\in \Sm$  and let $D=\sum_{j=1}^n m_j D_j$ be as in \ref{para:KM-SNCD}.
If $p>0$, write $m_j=p^{s_j} m_j'$ with $(p,m_j')=1$ and $s_j\ge 0$; if $p=0$ we set
$m_j':=m_j$ and $s_j=0$. (We will use the convention $0^0=1$.)
After renumbering we can assume that 
\eq{para:KM-SNCDp1}{s_1\ge s_{2}\ge \ldots s_n\ge 0.}
Fix $i\in \{1,\ldots,n\}$.
If $p>0$, then there is an inverse Cartier operator on the absolute logarithmic differential forms 
(defined by the same formula as in \ref{para:Cartier})
\[C^{-1}:\Omega^{r-1}_{D_i}(\log E_i)\to \frac{F_*\Omega^{r-1}_{D_i}(\log E_i)}{d\Omega^{r-2}_{D_i}(\log E_i)},\]
where $F$ is the absolute Frobenius and we write 
$\Omega^{r-1}_{D_i}(\log E_i)$ as a  shorthand for  $\Omega^{r-1}_{D_i}(\log E_{i|D_i})$ etc. The $\sO_{D_i}$-submodule 
$B_s\Omega^{r-1}_{D_i}(\log E_i)\subset F^s_*\Omega^{r-1}_{D_i}$ is recursively defined by
\[B_0\Omega^{r-1}_{D_i}(\log E_i):=0, \quad 
C^{-1}: B_{s-1}\Omega^{r-1}_{D_i}(\log E_i)\xr{\simeq}
\frac{B_s\Omega^{r-1}_{D_i}(\log E_i)}{d\Omega^{r-2}_{D_i}(\log E_i)}.\]
Denote by $D(i)$ the minimal divisor satisfying
$p^{s_i} D(i)\ge D+E_i$, i.e., by \eqref{para:KM-SNCDp1}
\[D(i)=\sum_{j=1}^{i} p^{s_j-s_i}m_j'  D_j 
 +\sum_{j=i+1}^n \left\lceil\frac{m_j+1}{p^{s_i}} \right\rceil D_j.\]
We write $\sO_{D_i}(-D):=\sO_X(-D)_{|D_i}$ etc. and have natural inclusions of $\sO_{D_i}^{p^{s_i}}$-modules
\mlnl{\sO_{D_i}(-D(i))^{p^{s_i}}\otimes_{\sO_{D_i}^{p^{s_i}}} B_{s_i}\Omega^{r-1}_{D_i}(\log E_i)\subset
\sO_{D_i}(-D(i))^{p^{s_i}} \otimes_{\sO_{D_i}^{p^{s_i}}} \Omega^{r-1}_{D_i}(\log E_i))\\
\subset \sO_{D_i}(-D-E_i)\otimes_{\sO_{D_i}} \Omega^{r-1}_{D_i}(\log E_i).}
We obtain a sheaf of $\sO_{D_i}^{p^{s_i}}$-modules
\[Q^{i, r-1}_D:=\frac{\sO_{D_i}(-D-E_i)\otimes_{\sO_{D_i}} \Omega^{r-1}_{D_i}(\log E_i)}
{\sO_{D_i}(-D(i))^{p^{s_i}}\otimes_{\sO_{D_i}^{p^{s_i}}} B_{s_i}\Omega^{r-1}_{D_i}(\log E_i)}.\]
If $p=0$, then $s_i=0$ and we have $Q^{i,r-1}_D=\sO_{D_i}(-D-E_i)\otimes_{\sO_{D_i}} \Omega^{r-1}_{D_i}(\log E_i)$.
\end{para}

\begin{lem}\label{lem:Qlc}
Let the notations and assumptions be as in \ref{para:KM-SNCDp}. Assume $k$ is perfect.
\begin{enumerate}[label=(\arabic*)]
\item\label{lem:Qlc01} Then $Q^{i, r-1}_D$ is a locally free $\sO_{D_i}$-module.
\item\label{lem:Qlc02} Let $j_i:U_i= D_i\setminus|E_i|\inj D_i$ be the open immersion.
Then 
\[j_i^*Q^{i,r-1}_D=
 \sO_{U_i}\left(-\sum_{j=1}^{i} p^{s_j-s_i}m_j'  D_j\right)^{p^{s_i}}\otimes_{\sO_{U_i}^{p^{s_i}}} 
 \frac{\Omega^{r-1}_{U_i}}{B_{s_i}\Omega^{r-1}_{U_i}}\]
and the restriction map $Q^{i, r-1}_D\to j_{i*}j_i^*Q^{i,r-1}_D$ is injective.
\end{enumerate}
\end{lem}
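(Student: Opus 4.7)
The plan is to establish part (2) by a direct base-change computation on $U_i$, and to prove part (1) by a local analysis on $D_i$; the injectivity of the restriction map in part (2) then follows from part (1).

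For part (2), on $U_i = D_i\setminus|E_i|$ three simplifications occur: (a) the log structure becomes trivial, so $\Omega^{r-1}_{D_i}(\log E_i)|_{U_i} = \Omega^{r-1}_{U_i}$; (b) the ceiling contributions to $D(i)$ supported on $|E_i|$ vanish, so $D(i)|_{U_i} = A|_{U_i}$ with $A := \sum_{j\le i}p^{s_j-s_i}m_j'D_j$; and (c) the Cartier-divisor identity $p^{s_i}A|_{U_i} = D|_{U_i}$ holds on the nose, since on $U_i$ the divisor $D$ has only components of index $\le i$. The Frobenius-semilinear $p^{s_i}$-th power map then exhibits $\sO_{U_i}(-A)^{p^{s_i}}$ as a rank-one locally free $\sO_{U_i}^{p^{s_i}}$-submodule of $\sO_{U_i}(-D|_{U_i})$ generating the latter over $\sO_{U_i}$, giving the canonical identification
\[
\sO_{U_i}(-D|_{U_i}) \otimes_{\sO_{U_i}} \Omega^{r-1}_{U_i}
\;\simeq\;
\sO_{U_i}(-A)^{p^{s_i}} \otimes_{\sO_{U_i}^{p^{s_i}}} \Omega^{r-1}_{U_i}.
\]
Under this isomorphism, the denominator of $Q^{i,r-1}_D|_{U_i}$ matches the subsheaf $\sO_{U_i}(-A)^{p^{s_i}} \otimes_{\sO_{U_i}^{p^{s_i}}} B_{s_i}\Omega^{r-1}_{U_i}$, and flatness of the rank-one locally free $\sO_{U_i}^{p^{s_i}}$-module $\sO_{U_i}(-A)^{p^{s_i}}$ yields the desired formula.

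For part (1), the assertion is Zariski-local on $D_i$. At a point $x\in D_i$, pick a regular system of parameters $(t; u_1,\ldots,u_a, z_1,\ldots,z_b, v_1,\ldots,v_e)$ on the henselization of $X$ at $x$ compatible with the SNCD structure: $D_i = \div(t)$, the $u_k$ cut out the components of $E_i$ through $x$ (with $n_k := m_{\ell_k}$, $\ell_k>i$), the $z_l$ cut out the components of $D - m_iD_i$ of index $<i$ through $x$, and the $v_m$ correspond to transverse directions meeting no component of $|D|$. Trivializing the line bundles $L = \sO_{D_i}(-D-E_i)$ by a generator $e_0$ and $\sO_{D_i}(-D(i))$ by a generator $\mu$ yields $\mu^{p^{s_i}} = T\cdot e_0$ with
\[
T = \prod_{k=1}^{a} u_k^{\alpha_k},\qquad
\alpha_k := p^{s_i}\,\big\lceil\tfrac{n_k+1}{p^{s_i}}\big\rceil - (n_k+1) \in [0,p^{s_i}),
\]
so that $Q^{i,r-1}_D$ becomes locally the quotient $\Omega^{r-1}_{D_i}(\log E_i) / T\cdot B_{s_i}\Omega^{r-1}_{D_i}(\log E_i)$.

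The main technical obstacle is the local freeness of this quotient as an $\sO_{D_i}$-module, where the $\sO_{D_i}$-module structure comes via the Frobenius isomorphism $F^{s_i}\colon \sO_{D_i}\xrightarrow{\sim}\sO_{D_i}^{p^{s_i}}$ (valid since $k$ is perfect and $D_i$ is smooth). The key input is a structural description of $B_{s_i}\Omega^{r-1}_{D_i}(\log E_i)$ as an $\sO_{D_i}^{p^{s_i}}$-submodule of $\Omega^{r-1}_{D_i}(\log E_i)$, obtained from its recursive definition via the iterated inverse Cartier operator $C^{-s_i}$. Decomposing $\sO_{D_i}$ as a free $\sO_{D_i}^{p^{s_i}}$-module with the monomial basis indexed by exponent vectors in $[0,p^{s_i})^{a+b+e}$ produces a parallel direct-sum decomposition of $\Omega^{r-1}_{D_i}(\log E_i)$ compatible with the log-regular wedge basis. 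Tracking $T\cdot B_{s_i}$ against this decomposition---using the bound $\alpha_k < p^{s_i}$ and compatibility of $C^{-s_i}$ with the monomial grading---identifies $Q^{i,r-1}_D$ locally as a direct sum of free rank-one $\sO_{D_i}^{p^{s_i}}$-modules, completing the proof of local freeness. With part (1) in hand, the injectivity $Q^{i,r-1}_D \to j_{i*}j_i^*Q^{i,r-1}_D$ is immediate: on each connected component of the smooth (hence integral) scheme $D_i$, locally free sheaves are torsion-free and therefore inject into their restrictions to the dense open $U_i$.
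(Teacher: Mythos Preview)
Your proposal is correct and follows essentially the same route as the paper. Both arguments reduce part (1) to a local split-injection statement by trivializing the two line bundles, noting that $\sigma^{p^{s_i}}/\tau$ (your $T$) is a monomial in the $E_i$-variables with exponents in $[0,p^{s_i})$, and then checking that an explicit $\sO_{D_i}^{p^{s_i}}$-basis of $B_{s_i}\Omega^{r-1}_{D_i}(\log E_i)$ embeds into the monomial $\sO_{D_i}^{p^{s_i}}$-basis of $\Omega^{r-1}_{D_i}(\log E_i)$; the paper makes your phrase ``compatibility of $C^{-s_i}$ with the monomial grading'' concrete by writing down the basis elements $t_{k_1}^{p^{\mu}-1}\cdots t_{k_{\rho_1}}^{p^{\mu}-1}\,dt_{k_1}\cdots dt_{k_{\rho_1}}\,\dlog t_{l_1}\cdots\dlog t_{l_{\rho_2}}$ for $\mu\in[0,s_i-1]$, and part (2) is handled identically in both.
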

\begin{proof}
The first statement in \ref{lem:Qlc02} is  clear, the second statement follows from 
\ref{lem:Qlc01}. Furthermore, \ref{lem:Qlc01} is immediate if $s_i=0$. Thus we assume $s_i\ge 1$ and hence also $p>0$.
 The statement is local. Let $x\in X$ be a point, up to replacing $\{D_j\}_j$ by some ordered subset we can assume that 
 all $D_j$ contain $x$. Hence we  may assume $X=\Spec R$, with $R$ a local ring, essentially smooth over $k$ and with
a regular sequence of parameters $t_1,\ldots, t_N$, where $t_j$ is a local equation for $D_j$, $j=1,\ldots, n$. 
Then $D_i=\Spec R_i$, with $R_i=R/(t_i)$.
Set 
\eq{lem:Qlc0}{\tau:= \prod_{j=1}^i t_j^{m_j}\prod_{j=i+1}^n t_j^{m_j+1},}
which is an  equation for $D+E_i$.
Since $k$ is perfect of positive characteristic, an $R_i^{p^{s_i}}$-basis of 
$\tau\cdot\Omega^{r-1}_{R_i}(\log (t_{i+1}\cdots t_{n}))$
is given by
\[\tau\cdot  t_1^{j_1}\cdots t_N^{j_N}\cdot dt_{k_1}\cdots dt_{k_{r_1}} \cdot
\dlog t_{l_1}\cdots \dlog t_{l_{r_2}},\] 
where   $j_\nu \in[0,p^{s_i}-1]$,
$1\le k_1<\cdots < k_{r_1}\le i$,
$i+1\le l_1<\cdots < l_{r_2}\le n$,
and $r_1$, $r_2\ge 0$ with $r_1+r_2=r-1$.
By \eqref{para:KM-SNCDp1} we have a local equation for $p^{s_i}D(i)$ of the form
\eq{lem:Qlc1}{\sigma^{p^{s_i}}= \tau\cdot \prod_{j=i+1}^n t_j^{h_j}, \quad \text{with } h_j\in [0, p^{s_i}-1].}
It follows from the definition of the inverse Cartier operator, that an $R_i^{p^{s_i}}$-basis
of $\sigma^{p^{s_i}} \cdot B_{s_i}\Omega^{r-1}_{R_i}(\log (t_{i+1}\cdots t_{n}))$
as submodule of $\tau \cdot\Omega^{r-1}_{R_i}(\log (t_{i+1}\cdots t_n))$ is given by
\eq{lem:Qlc2}{\sigma^{p^{s_i}}\cdot 
t_{k_1}^{p^{\mu}-1}\cdots t_{k_{\rho_1}}^{p^{\mu}-1}\cdot dt_{k_1}\cdots dt_{k_{\rho_1}} \cdot
\dlog t_{l_1}\cdots \dlog t_{l_{\rho_2}}, }
where $k_\nu$, $l_\nu$ are as above, $\mu\in[0, s_i-1]$, 
and  $\rho_1\ge 1$, $\rho_2 \ge 0$ with $\rho_1+\rho_2=r-1$.
By \eqref{lem:Qlc1} and  direct inspection of the bases  above, we find that the inclusion 
\[\sigma^{p^{s_i}}\cdot B_{s_i}\Omega^{r-1}_{R_i}(\log (t_{i+1}\cdots t_n))
\inj \tau\cdot\Omega^{r-1}_{R_i}(\log (t_{i+1}\cdots t_n))\]
is a split-injection of $R_i^{p^{s_i}}$-modules.
Hence $Q_D^{i, r-1}$ is a locally free $\sO_{D_i}$-module.
\end{proof}

\begin{lem}\label{lem:Ker-reg}
Let $(R,\fm)$ be a regular local ring and $t_1,\ldots, t_n\in \fm$
 be part of a regular sequence of local parameters ($n\ge 1$). Set $R_n:=R/(t_n)$
 and denote by $\bar{t}_i$ the image of $t_i$ in $R_n$.
Then the following sequence is exact
\[0\to 1+t_n R\to  (R[\tfrac{1}{t_1\cdots t_{n-1}}])^\times
\to (R_n[\tfrac{1}{\bar{t}_1\cdots \bar{t}_{n-1}}])^\times\to 0.\]
Furthermore, $1+t_nR$ is multiplicatively generated by elements $1+t_nu$, with $u\in R^\times$.
\end{lem}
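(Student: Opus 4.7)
The plan is to identify the unit groups of $S:=R[\tfrac{1}{t_1\cdots t_{n-1}}]$ and $\bar S:=R_n[\tfrac{1}{\bar t_1\cdots\bar t_{n-1}}]=S/t_nS$ explicitly via unique factorization, and then reduce the asserted exactness to the elementary sequence
\[1\to 1+t_nR\to R^\times\to R_n^\times\to 1.\]

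By the Auslander--Buchsbaum theorem, $R$ is a UFD, and every element of a regular system of parameters is a prime element; the same applies to $R_n$. I would then prove that every $\alpha\in S^\times$ has a unique representation
\[\alpha=u\cdot t_1^{a_1}\cdots t_{n-1}^{a_{n-1}},\quad u\in R^\times,\ a_j\in\Z.\]
For existence, write $\alpha=r/(t_1\cdots t_{n-1})^N$ with $r\in R$ and use $\alpha\cdot\alpha^{-1}=1$ to obtain an equation $rr'=(t_1\cdots t_{n-1})^{N+N'}$ in $R$, which forces the prime factors of $r$ in the UFD $R$ to lie among the $t_j$'s. For uniqueness, no nontrivial monomial $\prod t_j^{c_j}$ ($c_j\in\Z$) can lie in $R^\times$, by comparing prime factorizations in $R$. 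The analogous description holds for $\bar S^\times$. Under these identifications the map $S^\times\to\bar S^\times$ becomes the direct sum of $R^\times\to R_n^\times$ and $\id_{\Z^{n-1}}$, so the exactness of the sequence in the lemma reduces to that of the displayed sequence for $R$ and $R_n$. The latter is immediate: surjectivity holds because $R$ is local (any lift of a unit of $R_n$ remains outside $\fm_R$), and the kernel equals $1+t_nR$ since $1+t_nR\subset 1+\fm_R\subset R^\times$.

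For the final assertion, given $1+t_nr\in 1+t_nR$: if $r\in R^\times$ there is nothing to prove. If $r\in\fm_R$, I would set $u:=(r-1)/(1+t_n)\in R$, well-defined because $1+t_n\in R^\times$; since $u\equiv-1\pmod{\fm_R}$ we have $u\in R^\times$, and the direct computation $(1+t_nu)(1+t_n)=1+t_n\bigl(1+u(1+t_n)\bigr)=1+t_nr$ expresses $1+t_nr$ as a product of two generators of the desired form. The one subtle point in the whole argument is the uniqueness of the factorization of units in $S^\times$, which uses that each $t_i$ defines a distinct height-one prime of the UFD $R$; this is precisely what the regular-sequence hypothesis provides.
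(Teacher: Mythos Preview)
Your proof is correct and follows essentially the same approach as the paper: both use Auslander--Buchsbaum to get the UFD property, identify $S^\times$ with $R^\times\times\Z^{n-1}$ via unique factorization (and similarly for $\bar S^\times$), and then read off exactness; for the generation statement both give an explicit two-factor decomposition of $1+t_n r$ with $r\in\fm$ (the paper uses $u=r-1$, $v=1/(1+t_n u)$, while you use $u=(r-1)/(1+t_n)$ and the fixed factor $1+t_n$, which is an equally valid variant).
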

\begin{proof}
By the Auslander-Buchsbaum theorem, $R$ is factorial. 
Since $R/(t_i)$ and $R/(t_n, t_i)$ are again local regular rings (in particular they are integral),
we see that $t_1, \ldots, t_n$ are prime elements in $R$ and 
$\bar{t}_1, \ldots, \bar{t}_{n-1}$ are prime elements in $R_n$.
It follows that every unit in $v\in R[\frac{1}{t_1\cdots t_{n-1}}]$ can be uniquely written in the form
\[v= ut_1^{r_1}\cdots t_{n-1}^{r_{n-1}},\quad \text{with } u\in R^\times, r_1,\ldots, r_{n-1}\in\Z,\]
and similar with $R_n[\frac{1}{\bar{t}_1\cdots \bar{t}_{n-1}}]$.
Hence $v\equiv 1$ mod $t_n$ implies that $r_1=\ldots= r_{n-1}=0$ and $u\in 1+t_n R$.
This implies the exactness of the sequence in the statement.
For the last part observe that if $a\in \fm$, then $1+t_n a=(1+t_n u)\cdot (1+t_nv)$
with $u=a-1$,  $v=1/(1+t_nu)\in R^\times$.
\end{proof}

\begin{prop}\label{prop:grV}
Let the notations and assumptions be as in \ref{para:KM-SNCDp}.
Then there is an isomorphism of  sheaves of abelian groups
\[\theta: Q^{i, r-1}_D \xr{\simeq} \frac{V_r^i}{V_r^{i-1}}\]
given on local sections  by 
\[\theta(a\dlog b_1\cdots \dlog b_{r-1})=\{1+\tilde{a}, \tilde{b}_1,\ldots, \tilde{b}_{r-1}\},\]
where $\tilde{a}\in \sO_X(-D-E_i)$, $\tilde{b}_j \in \sO_{X\setminus E_i}^\times$ are local lifts of
$a$ and $b_j$, respectively.
\end{prop}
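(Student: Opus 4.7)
The statement is Nisnevich-local on $X$, so I may assume $X = \Spec R$ with $R$ a regular local ring essentially smooth over $k$ and with regular parameters $t_1,\ldots,t_N$ such that $t_j$ is a local equation for $D_j$, $j \le n$. Write $\tau$ for the local equation of $D+E_i$ from \eqref{lem:Qlc0} and $\sigma$ for that of $D(i)$ from \eqref{lem:Qlc1}. The plan is to construct $\theta$ first as a map from $\sO_{D_i}(-D-E_i)\otimes_{\sO_{D_i}}\Omega^{r-1}_{D_i}(\log E_i)$ via the given formula, show it descends to $Q^{i,r-1}_D$, and establish bijectivity by reducing to the single principal divisor case of \ref{para:local-gr-comp}.

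For \emph{well-definedness}, two lifts of $a \in \sO_{D_i}(-D-E_i)$ differ by an element of $t_i\cdot\sO_X(-D-E_i)$, which equals the defining ideal of $D+E_{i-1}$, so the resulting symbols agree modulo $V^{i-1}_r$. Two lifts of $\tilde{b}_j$ differ by a factor $1+t_iu$ with $u \in R$, and applying Lemma~\ref{lem:lcI}\ref{lem:lcI1} rewrites $\{1+\tilde{a}, 1+t_iu, \ldots\}$ as a symbol whose first entry lies in $\sO^\times_{X|D+E_{i-1}}$, while the remaining entries (after decomposing $\tilde{a} = \tau\tilde{c}$) are sections of $j_{i-1*}K^M_{r-1, U_{i-1}}$, so the whole symbol lies in $V^{i-1}_r$. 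The Leibniz and Steinberg relations descending $\theta$ to $\Omega^{r-1}_{D_i}(\log E_i)$ follow from the standard Milnor $K$-theoretic identities (with the help of Lemma~\ref{lem:lcI}\ref{lem:lcI2}) together with the observation that $\{1+\tau u, 1+\tau u'\} \in V^{i-1}_r$, since $\tau^2$ lies in the $D+E_{i-1}$ ideal (using $m_i \ge 1$). \emph{Surjectivity} is immediate: by Lemma~\ref{lem:Ker-reg} and unique factorization in $R[\tfrac{1}{t_{i+1}\cdots t_n}]$, $V^i_r/V^{i-1}_r$ is generated by symbols $\{1+\tau u, b_1,\ldots, b_{r-1}\}$ with $u \in R$ and each $b_j$ either a $t_{j'}^{\pm 1}$ ($j' > i$) or a unit of $R$, all of which lie in the image of $\theta$.

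The core technical step is showing that $\theta$ vanishes on $\sO_{D_i}(-D(i))^{p^{s_i}} \otimes B_{s_i}\Omega^{r-1}_{D_i}(\log E_i)$, which is trivial for $s_i = 0$. For $s_i \ge 1$ we have $p > 0$ and $R$ of characteristic $p$. The explicit basis \eqref{lem:Qlc2} combined with the form \eqref{lem:Qlc1} of $\sigma^{p^{s_i}}$ expresses each generator (after ignoring those with a $dt_i$-factor, which vanish on $D_i$) as $\theta(\beta^{p^\mu}\dlog\underline{b})$ with $\beta = \sigma^{p^{s_i-\mu}}t_{k_1}\cdots t_{k_{\rho_1}}$, $k_\nu \in [1, i-1]$, $\rho_1 \ge 1$, and $\mu \in [0, s_i-1]$. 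The characteristic-$p$ identity $(1+\beta)^{p^\mu} = 1+\beta^{p^\mu}$ combined with $\{u^{p^\mu}, v\} = p^\mu \{u, v\}$ in Milnor $K$-theory rewrites this symbol as $p^\mu \{1+\beta, \underline{\tilde{b}}\}$; the presence of the $t_{k_\nu}$-factors in $\beta$ then allows, via suitable applications of the Steinberg relation and Lemma~\ref{lem:lcI}, to transfer one such factor into a $\dlog$-slot, leaving a representative whose first entry lies in $\sO^\times_{X|D+E_{i-1}}$, hence in $V^{i-1}_r$. This is in the same spirit as the proof of \cite[Theorem 2.19]{RS18} and is the main obstacle.

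For \emph{injectivity}, Lemma~\ref{lem:Qlc} shows that $Q^{i,r-1}_D$ is locally free on $D_i$ and injects into its restriction to $D_i \setminus E_i$, so it suffices to check at each generic point $\eta_i$ of $D_i$. At $\eta_i$ the localization is a DVR with uniformizer $t_i$ and the $D_j$ ($j \ne i$) disappear, so tracking the definitions of the various $V^j_r$ identifies $V^i_r/V^{i-1}_r|_{\eta_i}$ with the subgroup of $U_{r, m_i}/U_{r, m_i+1}|_{\eta_i}$ generated by symbols $\{1+t_i^{m_i}\tilde{a}, \tilde{b}_1, \ldots, \tilde{b}_{r-1}\}$ with all $\tilde{b}_j \in R_{\eta_i}^\times$. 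Under the isomorphism \eqref{para:local-gr-comp1} this subgroup corresponds to the image of $\Omega^{r-1}_{k(\eta_i)}/B_{s_i}$ in $^{m_i}G^r$, which equals $Q^{i,r-1}_D|_{\eta_i}$; the inclusion $\Omega^{r-1}/B_{s_i} \hookrightarrow {^{m_i}G^r}$ is injective because in the presentation $\Omega^{r-2} \to \Omega^{r-1}/B_{s_i} \oplus \Omega^{r-2}/B_{s_i}$ the vanishing of the second component combined with injectivity of $C^{-s_i}$ on $\Omega^{r-2}$ (and $(m_i', p) = 1$) forces the preimage to be $0$.
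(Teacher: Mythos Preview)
Your overall strategy matches the paper's: localize, construct the map on $\tau R_i\otimes\bigwedge M^{\rm gp}$, verify independence of lifts and multilinearity, show the map descends through the quotient by $\sigma^{p^{s_i}}B_{s_i}\Omega^{r-1}_{R_i}(\log E_i)$, and establish injectivity by restriction to a localization where Lemma~\ref{lem-sesUV} applies. The well-definedness and injectivity steps are essentially as in the paper (the paper restricts to $S_i=R_i[\tfrac{1}{t_1\cdots\widehat{t_i}\cdots t_n}]$ rather than the generic point $\eta_i$, but the idea is identical, and both routes ultimately need a limit argument to remove the perfectness hypothesis of Lemma~\ref{lem:Qlc}, which you do not mention).

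The genuine gap is in the $B_{s_i}$-vanishing step. You propose to work with the $R_i^{p^{s_i}}$-basis \eqref{lem:Qlc2} and rewrite a typical element as $\beta^{p^\mu}\dlog\underline{b}$ with $\underline{b}$ containing the $t_{k_\nu}$ for $k_\nu\in[1,i-1]$. But these $t_{k_\nu}$ are \emph{not} in $M^{\rm gp}=R_i[\tfrac{1}{t_{i+1}\cdots t_n}]^\times$, so the defining formula $\theta(a\dlog\underline{b})=\{1+\tilde a,\underline{\tilde b}\}$ does not apply to them; the symbol $\{1+\beta^{p^\mu},t_{k_1},\ldots\}$ you write down is not even a priori an element of $V^i_r$. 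To evaluate $\theta$ on such a basis element one must first re-express each $dt_{k_\nu}$ through $\dlog$'s of genuine units of $R_i$, and after that rewriting the ``transfer'' manipulation you sketch no longer has the claimed shape---in particular the $t_i$-adic order of the first slot does not reach $m_i+1$, so one does not land in $\sO^\times_{X|D+E_{i-1}}$. The paper avoids this by using a different generating set for $\sigma^{p^{s_i}}B_{s_i}\Omega^{r-1}_{R_i}(\log E_i)$ as an \emph{abelian group} (from \cite[0, Proposition~2.2.8]{Il}), namely $(\sigma^{p^{s_i-\nu}}u)^{p^\nu}\dlog u\,\dlog\underline v$ with $u\in R_i^\times$ and $v_j\in M^{\rm gp}$; since every entry lies in $M^{\rm gp}$ the formula for $\theta$ applies verbatim, and a short explicit computation using Steinberg and Lemma~\ref{lem:lcI}\ref{lem:lcI2} puts the symbol in $V^{i-1}_r$. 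This choice of generators also dispenses with the perfectness assumption built into the basis \eqref{lem:Qlc2}.
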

\begin{proof}
For an \'etale $X$-scheme $W$ let $T(W)$ denote the free abelian group on the generators
$[e,f_1,\ldots, f_{r-1}]$, with $e\in \Gamma(W, \sO_X(-D-E_i)_{|W})$ and 
$f_j\in \sO^\times(W\setminus E_{i|W})$, $j=1,\ldots, r-1$.
Then $W\mapsto T(W)$ defines a Nisnevich sheaf on $X$ 
and we have  surjective morphisms of sheaves
\eq{prop:grV01}{T \to Q^{i, r-1}_D, \quad
[e, f_1,\ldots, f_{r-1}]\mapsto \bar{e}\dlog \bar{f}_1\cdots \dlog\bar{f}_{r-1},}
where $\bar{e}$ and $\bar{f}_j$ denote the image of $e$ and $f_j$ in $\sO_{D_i}(-D-E_i)$ and 
$\sO_{D_i\setminus E_i}^\times$, respectively,
and 
\eq{prop:grV02}{ T \to \frac{V_r^i}{V_r^{i-1}}, \quad
[e, f_1,\ldots, f_{r-1}]\mapsto \{1+e, f_1,\ldots, f_{r-1}\}.}
The map $\theta$ from the statement is well-defined and surjective when we show that the kernel of the surjection
$\eqref{prop:grV01}$ is mapped to zero under \eqref{prop:grV02}.
This is a local question and it suffices to consider stalks.
Note that both sheaves in the statement have support in $D_i$. 
Therefore it suffices to show  the statement in every point $x\in D_i$.
Up to  replacing $\{D_j\}_j$ by an ordered subset we may assume that all components of $D$ meet in $x$.
Thus we can assume $X=\Spec R$, where $R$ is a local ring with maximal ideal $\fm$, which is
essentially smooth over $k$. Let $t_j\in\fm$ be a local equation 
of $D_j$, $j=1,\ldots n$. Set $R_i:=R/(t_i)$.

We first show that a well-defined map $\theta$ as in the statement exits.
Set $M^{\rm gp}:=R_i[\frac{1}{t_{i+1}\cdots t_n}]^\times$ and 
denote by $\tau$ the  equation \eqref{lem:Qlc0} for $D+E_i$. There is a well-defined map
\eq{prop:grV1}{\tau R_i \oplus \bigoplus_{j=1}^{r-1} M^{\rm gp}\to \frac{V_r^i}{V_r^{i-1}}}
given by 
\[(a, (b_1,\ldots, b_{r-1}))\mapsto \{1+\tilde{a}, \tilde{b}_1,\ldots, \tilde{b}_{r-1}\},\]
where $\tilde{a}\in \tau R$, $\tilde{b}_j\in (R[\frac{1}{t_{i+1}\cdots t_n}])^\times$ are lifts of $a$ and $b_j$,
respectively.
We have to check the independence  of the choices:
Choosing a different lift of $a$  results in multiplying $1+\tilde{a}$ by an element
$(1+t_i \tau_i c)$, for $c\in R$. The independence of the choice of $\tilde{a}$
follows, since $\{1+t_i\tau c, \ul{\tilde{b}}\}\in V^{i-1}_{r}$. 
Two different lifts of $b_1$ differ by Lemma \ref{lem:Ker-reg} by a product of element of the form
$(1+t_i e)$ with $e\in R^\times$. For $c\in R$ and $\tilde{b}_j$ as above, we compute
\begin{align*}
\{1+\tau c, 1+t_i e, \ul{\tilde{b}}\} &= -\{1+e t_i , 1+c \tau , \ul{\tilde{b}}\}\\
                                                        & = -\{1+ \frac{ec}{1+et_i} t_i\tau, -et_i(1+c\tau), \ul{\tilde{b}}\}, & & 
                                                                       \text{by }\ref{lem:lcI}\ref{lem:lcI1},\\
                                                        &\in V^{i-1}_r.
\end{align*}
This shows the independence of the choice of the lift of $\tilde{b}_1$ and similarly for $\tilde{b}_j$.
Clearly \eqref{prop:grV1} is $\Z$-linear in the $b_j$; the linearity in $a$ follows from
\[(1+\tilde{a}_1)(1+\tilde{a}_2)= (1+\tilde{a}_1+\tilde{a}_2) (1+ t_i c),\]
for $\tilde{a}_i$, $c\in\tau R$.
Furthermore, if $p\neq 2$, then
\[\{1+\tau c, \ul{\tilde{b}}\}\equiv  2\cdot \{1+ \tfrac{1}{2}\tau c, \ul{\tilde{b}}\} \quad \text{mod } V^{i-1}_r.\]
Thus the formula $\{b,b\}=\{b,-1\}$ in $K^M_2(k(X))$ implies that \eqref{prop:grV1}
is alternating in the $b$'s (also for $p=2$). Altogether we see that \eqref{prop:grV1}
induces a surjective map
\eq{prop:grV2}{\tau R_i\otimes_\Z \bigwedge_{j=1}^{r-1} M^{\rm gp}\surj \frac{V_r^i}{V_r^{i-1}}.}
We claim that this map factors via $\tau\cdot \Omega^{r-1}_{R_i}(\log( t_{i+1}\cdots t_n))$.
By \cite[(1.7)]{KatoLog} we have to show that \eqref{prop:grV2} maps the following elements to zero
\eq{prop:grV3}{\tau a\otimes a\wedge b_1\wedge\cdots\wedge b_{r-2}- 
\sum_\nu \tau u_\nu\otimes u_\nu\wedge b_1\wedge\cdots\wedge b_{r-2},}
where $a\in M=M^{\rm gp}\cap R_i$ and $u_\nu\in R_i^\times$ such that 
$a=\sum_\nu u_\nu$, and $b_j\in M^{\rm gp}$.
Choose lifts $\tilde{u}_\nu\in R^\times$ of $u_\nu$ and set 
$\tilde{a}=\sum_\nu \tilde{u}_\nu$.
Let $\ul{\tilde{b}}=(\tilde{b}_1,\ldots, \tilde{b}_{r-2})$ with 
$\tilde{b}_j\in R[\frac{1}{t_{i+1}\cdots t_n}]^\times$ lifts of $b_j$.
Write 
\[(1+\tau\sum_\nu \tilde{u}_\nu)= \prod_\nu (1+\tau \tilde{u}_\nu) \cdot (1+\tau t_i e),\]
for some $e\in R$. Assume first $e\in R^\times$ is a  unit. 
Then in $K^M_r(k(X))$
\begin{align*}
\{1+\tau\tilde{a}, \tilde{a},\ul{\tilde{b}}\}& =-\{1+\tau\tilde{a}, -\tau, \ul{\tilde{b}}\}\\
                                                         &=-\sum_\nu \{1+\tau \tilde{u}_\nu, -\tau, \ul{\tilde{b}}\} 
                                                             - \{1+\tau t_i e,-\tau, \ul{\tilde{b}}\}\\
                                                         &= \sum_\nu \{1+\tau \tilde{u}_\nu, \tilde{u}_\nu, \ul{\tilde{b}}\} 
                                                             +\underbrace{\{1+\tau t_i e, t_i e,\ul{\tilde{b}}\}}_{\in V^{i-1}_r}.
\end{align*}
Since the starting term and the final term are in $V^i_r$,
the equality of these two terms holds in $V^i_r$. 
If $e$ is not a unit we can write 
\[(1+\tau t_i e)=(1+\tau t_i (e-1))\cdot (1+\tau t_i \tfrac{1}{1+\tau t_i (e-1)})\]
and argue similarly.
Hence \eqref{prop:grV2} sends the relation \eqref{prop:grV3} to zero and
induces a surjection
\eq{prop:grV4}{\tau \cdot \Omega^{r-1}_{R_i}(\log (t_{i+1}\cdots t_n))\surj \frac{V^i_r}{V^{i-1}_r}.}
Assume $s_i\ge 1$. Let $\sigma^{p^{s_i}}$ be the equation \eqref{lem:Qlc1} for $p^{s_i}D(i)$.
We want to show that \eqref{prop:grV4} maps 
$\sigma^{p^{s_i}}\cdot B_{s_i}\Omega^{r-1}_{R_i}(\log (t_{i+1}\cdots t_n))$ to zero.
The latter module is generated as  abelian group by the elements (cf. \cite[0, Proposition 2.2.8]{Il})
\eq{prop:grV5}{(\sigma^{p^{s_i-\nu}} u)^{p^\nu}\dlog u\dlog v_1\cdots\dlog v_{r-2}, 
\quad u\in R_i^\times, v_j\in M^{\rm gp}, \, 0\le \nu\le s_i-1.}
We compute in $K^M_r(k(X))$ (cf. \cite[Lemma (4.5)]{BK})
\begin{align*}
\{1+(\sigma^{p^{s_i-\nu}}\tilde{u})^{p^\nu}, \tilde{u}, \ul{\tilde{v}}\} &=
p^{\nu}\{1+\sigma^{p^{s_i-\nu}}\tilde{u}, \tilde{u}, \ul{\tilde{v}}\}\\
&= - p^{\nu}\{1+\sigma^{p^{s_i-\nu}}\tilde{u}, -\sigma^{p^{s_i-\nu}}, \ul{\tilde{v}}\}\\
&=-\{1+\sigma^{p^{2 s_i-\nu}}\tilde{u}^{p^{s_i}}, -\sigma, \ul{\tilde{v}}\}\\
& =\{1+\sigma^{p^{2s_i-\nu}-1}w_1, w_2, \ul{\tilde{v}}\}, & & \text{with } w_j\in R^\times,
\end{align*}
where the last equality holds by Lemma \ref{lem:lcI}\ref{lem:lcI2}.
Since  $p^{2s_i-\nu}-1\ge 2$, for all $\nu\le s_i-1$, the  last term lies in $V_{r}^{i-1}$.
Thus \eqref{prop:grV4} maps the element \eqref{prop:grV5} to zero.
We obtain a surjective morphism $\theta$ as in the statement.
Set $S_i:=R_i[\frac{1}{t_1\cdots \widehat{t_i}\cdots t_n}]$ (we omit $t_i$ in the denominator).
Consider the following diagram
\[\xymatrix{
Q^{i,r-1}_D(R_i)=\frac{\tau\cdot \Omega^{r-1}_{R_i}(\log (t_{i+1}\cdots t_n))}
{\sigma^{p^{s_i}}\cdot B_{s_i}\Omega^{r-1}_{R_i}(\log(t_{i+1}\cdots t_n))}
\ar[rr]^-{\frac{1}{t_i^{m_i}}\cdot\text{restr.}}\ar[d]^{\theta} & &
\frac{\Omega^{r-1}_{S_i}}{B_{s_i}\Omega^{r-1}_{S_i}}\ar@{_(->}[d]^{\eqref{lem-sesUV1}}\\
\frac{V^i_r(R_i)}{V^{i-1}_r(R_i)}\ar[rr]^-{\text{restriction}} & &
\frac{V_{r, m_i}(S_i)}{U_{r, m_i+1}(S_i)}.
}\]
The diagram is commutative and  the vertical arrow on the right hand side is injective by Lemma  \ref{lem-sesUV}.
We can write $R$ as a directed limit of rings $A$ which are smooth over $\F_p$ and on which $t_1\cdots t_n$
defines a simple normal crossing divisor. For $R$ replaced by such an $A$ 
the top horizontal map is injective by Lemma \ref{lem:Qlc}; since directed limits are exact this holds for $R$ as well. 
Hence $\theta$ is injective, which yields the statement.
\end{proof}

\begin{rmk}\label{rmk:theta-func}
Let $X$, $D$, $D_j$, $E_i$, and $V^i_r=V^i_{r, X|D}$ be as in \ref{para:KM-SNCD} and $Q_D^{i,r-1}$ be as in \ref{para:KM-SNCDp}. Let 
$f:Y\to X$ be a smooth morphism and define $V^i_{r, Y|f^*D}$  and $Q_{f^*D}^{i,r-1}$ similarly, 
where we replace $D_j$ by $f^*D_j$ (which is smooth but possibly not connected)
and $E_i$ by $f^*E_i$.  We obtain a commutative diagram on $X_\Nis$
\[\xymatrix{
f_*Q^{i, r-1}_{f^*D}\ar[r]^-{\theta} & f_*(V^i_{r-1, Y|f^*D}/V^i_{r, Y|f^*D})\\
Q^{i, r-1}_D\ar[u]\ar[r]^-{\theta} &  V^i_{r-1, X|D}/V^i_{r, X|D},\ar[u]
}\]
where the vertical maps are the natural pullback maps and the horizontal maps are the isomorphisms constructed in Proposition \ref{prop:grV}.
By the explicit formula for $\theta$ the diagram obviously commutes.
\end{rmk}

The following corollary is not used in this paper but we include it here for use in \cite{RS-HLS}. 

\begin{cor}\label{lem:VZN}
Let $X\in \Sm$ with $\dim X=d$ and let $D$ be an effective Cartier divisor with simple normal crossing support.
Set $\sG:=V_{r, X|D}/V_{r, X|D+D_{\red}}$, $r\ge 1$, which we view as a sheaf on $Y=D_\red$.
Then the natural morphism $H^{d-1}(Y_\Zar,\sG)\to H^{d-1}(Y_{\Nis}, \sG)$ is surjective.
\end{cor}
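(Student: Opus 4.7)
The plan is to exhibit $\sG$ as a finitely filtered abelian sheaf on $Y = D_\red$ whose graded pieces are quasi-coherent sheaves (possibly after a Frobenius twist) on one of the smooth components $D_i$. Since $k$ is perfect, every coherent $\sO_{D_i}^{p^s}$-module is coherent as an $\sO_{D_i}$-module via the Frobenius isomorphism, so for such graded pieces the comparison $H^j_\Zar \to H^j_\Nis$ is an isomorphism in every degree. Combined with Grothendieck vanishing and the Nisnevich cohomological dimension bound on the $(d-1)$-dimensional scheme $Y$, which give $H^{\ge d}_\Zar(Y,-) = H^{\ge d}_\Nis(Y,-) = 0$, a five-lemma induction up the filtration will then force bijectivity of $H^{d-1}_\Zar(Y,\sG) \to H^{d-1}_\Nis(Y,\sG)$, in particular surjectivity.

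To build the filtration I would push the filtration of \ref{para:KM-SNCD},
\[U_{r,X|D+D_\red}= V^0_{r,X|D} \subset V^1_{r,X|D} \subset \cdots \subset V^n_{r,X|D} = V_{r,X|D},\]
through the quotient by $V_{r,X|D+D_\red}$, setting $F_i := \Im\bigl(V^i_{r,X|D}\to \sG\bigr)$. For $i\ge 1$, Proposition \ref{prop:grV} identifies $V^i_{r,X|D}/V^{i-1}_{r,X|D} \cong Q^{i,r-1}_D$, which is locally free on $D_i$ by Lemma \ref{lem:Qlc}, so $F_i/F_{i-1}$ is a coherent $\sO_{D_i}$-module (as a quotient of one). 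The delicate bottom term is
\[F_0 \;=\; U_{r,X|D+D_\red}/V_{r,X|D+D_\red}.\]
To handle $F_0$, I would further filter by combining the SNCD refinement of the local gr-description \ref{para:local-gr-comp} applied to $U_{r,X|D+D_\red}/U_{r,X|D+2D_\red}$ (established for $U$ on SNCD divisors in \cite{RS18}) with the $V$-filtration on $V_{r,X|D+D_\red}$ coming from \ref{para:KM-SNCD} (whose graded pieces are the coherent $Q^{i,r-1}_{D+D_\red}$). By Lemma \ref{lem-sesUV}, the resulting graded pieces of $F_0$ are subquotients of the $\sO_{D_i}^{p^{s_i'}}$-coherent sheaves $\Omega^{r-2}_{D_i}/Z_{s_i'}\Omega^{r-2}_{D_i}$ with $s_i' := v_p(m_i+1)$, which fall into the Frobenius-twisted quasi-coherent framework above.

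The hard part will be executing this patching for $F_0$ cleanly across the intersections $D_i\cap D_j$: Proposition \ref{prop:grV} carried out the analogous bookkeeping for the $V$-filtration on $V_{r,X|D}$ via the Steinberg relations and Lemma \ref{lem:lcI}, and the same style of argument must be adapted to $U_{r,X|D+D_\red}/V_{r,X|D+D_\red}$ using the SNCD results for $U$ from \cite{RS18}. Once this is in place, the five-lemma induction (the long exact sequences in both topologies being truncated on the right by the dimension bound $\dim Y = d-1$) is purely formal and delivers the desired surjectivity.
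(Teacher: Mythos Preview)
Your strategy is essentially the paper's, and the upper part of your filtration is exactly right: for $i\ge 1$ one has in fact $F_i/F_{i-1}\cong Q^{i,r-1}_D$ on the nose (not merely a quotient), since $V_{r,X|D+D_\red}\subset U_{r,X|D+D_\red}=V^0_r\subset V^{i-1}_r$. This is precisely the filtration the paper puts on $\sH:=V_{r,X|D}/U_{r,X|D+D_\red}=\sG/F_0$.

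The difference is in the bottom piece $F_0=U_{r,X|D+D_\red}/V_{r,X|D+D_\red}$. You propose to filter $F_0$ itself with quasi-coherent graded pieces by splicing together the SNCD $U$-computation from \cite{RS18} and the $V$-filtration for $D+D_\red$; you correctly flag this patching as the hard part. The paper sidesteps it entirely: since $U_{r,X|D+2D_\red}\subset V_{r,X|D+D_\red}$ (this is the first inclusion of the filtration in \ref{para:KM-SNCD} applied to $D+D_\red$), there is a surjection
\[
\sF:=U_{r,X|D+D_\red}/U_{r,X|D+2D_\red}\;\surj\;F_0,
\]
hence an exact sequence $\sF\to\sG\to\sH\to 0$. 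The sheaf $\sF$ already has a filtration with quasi-coherent graded pieces by \cite[Proposition 2.15, Theorem 2.19]{RS18}, so $H^{d-1}_\Zar\to H^{d-1}_\Nis$ is an isomorphism for both $\sF$ and $\sH$. With $H^d(Y_\tau,-)=0$ the degree-$(d-1)$ sequences are right exact in both topologies, and a two-out-of-three diagram chase gives surjectivity for $\sG$. This buys simplicity at the price of proving only surjectivity (which is all that is claimed) rather than the bijectivity you were aiming for; your route, if the $F_0$ bookkeeping is carried out, would yield the stronger isomorphism. One small side remark: you invoke ``$k$ perfect'', but the statement is over an arbitrary field; quasi-coherence (not coherence) of the graded pieces suffices for $H^{*}_\Zar\cong H^{*}_\Nis$, and this holds without perfectness.
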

\begin{proof}
With the notation from \ref{para:KM} we set 
\[\sF:=\frac{U_{r, X|D+D_{\red}}}{U_{r, X|D+2D_{\red}}}\quad \text{and}\quad \sH:=\frac{V_{r, X|D}}{U_{r, X|D+D_{\red}}}.\]
We have an exact sequence of Nisnevich sheaves
\eq{lem:VZN1}{\sF\to \sG\to \sH\to 0.}
By Grothendieck-Nisnevich vanishing we obtain exact sequences for $\tau\in\{\Zar,\Nis\}$
\[H^{d-1}(Y_\tau, \sF)\to H^{d-1}(Y_\tau,\sG) \to H^{d-1}(Y_\tau,\sH)\to 0. \]
By \ref{para:KM-SNCD} and Proposition \ref{prop:grV} the quotient $\sH$ admits a finite decreasing filtration 
whose  successive quotients are coherent $\sO_Y$-modules. Hence  the natural morphism
$H^{d-1}(Y_\Zar, \sH)\to H^{d-1}(Y_\Nis,\sH)$ is an isomorphism.
By \cite[Proposition 2.15, Theorem 2.19]{RS18} (cf. also \ref{para:err1} below)  the same holds for $\sF$.
This yields the statement.
\end{proof}

\begin{thm}\label{thm:proj-pf-relK}
Let $X$ be a smooth $k$-scheme  of pure dimension $d$ and $D$ an effective divisor on $X$
 whose support has  simple normal crossings. Let $E$ be a vector bundle of dimension $N+1$ on $X$
and denote by $\pi: \P:=\Proj({\rm Sym}\, E^\vee)\to X$ the projection from the structure map.
Let 
\[\eta^t:=c_1(\sO_{\P}(1))^t\in H^t_{\Nis}(\P, K^M_{t,\P})=\Hom_{D(\P_\Nis)}(\Z, K^M_{t,\P}[t]), \quad
t=0\ldots, N.\]
The map  in $D(X_\Nis)$, the derived category of abelian Nisnevich sheaves on $X$, 
\eq{prop:proj-pf-relK2}{-\cup \eta^t: \bigoplus_{t=0}^N V_{r-t,X|D}[-t]
\xr{\simeq} R\pi_* V_{r, \P|\pi^*D}, \quad r\ge 0,}
is an isomorphism.
If $D_{\rm red}$ is smooth the same is true  for the Zariski site.
\end{thm}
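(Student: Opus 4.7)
The plan is to deduce the formula from the already-known projective bundle formula for the sheaves $U_{r,X|D'}$ of \cite{RS18} by means of the finite filtration $V_r^\bullet$ on $V_{r,X|D}$ constructed in \ref{para:KM-SNCD}, whose associated graded pieces $V_r^i/V_r^{i-1}$ are identified by Proposition \ref{prop:grV} with the locally free coherent $\sO_{D_i}$-modules $Q_D^{i,r-1}$ of Lemma \ref{lem:Qlc}.

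First I would verify that $-\cup\eta^t$ is compatible with the filtrations $V_r^\bullet$ on $V_{r,X|D}$ and $V_{r,\P|\pi^*D}$: a local generator $\{1+a,\ul{b}\}$ of $V^i_{r-t,X|D}$ with $a$ a local section of $\sO_X(-D-E_i)$ and $\ul{b}$ a local section of $j_{i*}K^M_{r-t-1,U_i}$ cups with a local cocycle representative of $\eta^t$ to produce a section of $V^i_{r,\P|\pi^*D}$, because $\pi^*(D+E_i)$ is the corresponding divisor on $\P$. Together with Remark \ref{rmk:theta-func}, this yields a morphism of filtered complexes between the two sides of \eqref{prop:proj-pf-relK2}. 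By the five-lemma applied inductively to the short exact sequences $0\to V_r^{i-1}\to V_r^i\to Q_D^{i,r-1}\to 0$ and their analogues on $\P$, it suffices to establish the isomorphism with $V_{r,X|D}$ replaced by $V_r^0=U_{r,X|D+D_\red}$ and by each graded piece $Q_D^{i,r-1}$. The first case is exactly the projective bundle formula for $U$-sheaves of \cite{RS18} applied to the divisor $D+D_\red$, whose support is still the SNCD $D_\red$.

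For the graded pieces, the smoothness of $p\colon \pi^*D_i\to D_i$, which is a projective bundle of rank $N$, yields a filtration on $\Omega^{r-1}_{\pi^*D_i}(\log \pi^*E_i)$ with graded pieces $p^*\Omega^a_{D_i}(\log E_i)\otimes \Omega^b_{\pi^*D_i/D_i}$ for $a+b=r-1$, which is respected by the subsheaves $B_{s_i}$ and compatible with the twist by $\sO(-D-E_i)$; this induces a filtration on $Q_{\pi^*D}^{i,r-1}$ with graded pieces $p^*Q_D^{i,a}\otimes \Omega^b_{\pi^*D_i/D_i}$. The classical projective bundle formula for coherent cohomology, combined with the computation of $Rp_*\Omega^b_{\pi^*D_i/D_i}$, then produces the isomorphism on each graded piece.

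The main obstacle will be the last identification: checking that under the isomorphism $\theta$ of Proposition \ref{prop:grV} the cup product with $\eta^t$ corresponds to cup product with $c_1(\sO_\P(1))^t$ on the coherent side. This is done by tracing the explicit formula $\theta(a\dlog b_1\cdots\dlog b_{r-1})=\{1+\tilde{a},\tilde{b}_1,\ldots,\tilde{b}_{r-1}\}$ of Proposition \ref{prop:grV} together with the local description of $c_1(\sO_\P(1))$ in terms of $\dlog$ of trivializing sections. The Zariski statement under the additional hypothesis that $D_\red$ is smooth follows by the same argument, since the base case is then the Zariski version in \cite{RS18} and the graded pieces are coherent sheaves on the smooth divisors $D_i$, for which Zariski and Nisnevich cohomology agree.
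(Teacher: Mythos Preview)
Your overall strategy is exactly that of the paper: reduce to the local case $\P=\P^N_X$, use the filtration $V^0_r\subset\cdots\subset V^n_r=V_{r,X|D}$ from \ref{para:KM-SNCD}, invoke \cite[Theorem~2.28]{RS18} for the base case $V^0_r=U_{r,X|D+D_{\red}}$, and use Proposition~\ref{prop:grV} together with Remark~\ref{rmk:theta-func} to reduce the graded pieces to a projective bundle formula for $Q^{i,r-1}_D$, with the map $\cup\,\dlog(\eta^t)$ on the coherent side.

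The gap is in your treatment of the $Q$-step. The Koszul filtration on $\Omega^{r-1}_{\pi^*D_i}(\log\pi^*E_i)$ does \emph{not} induce a filtration on $Q^{i,r-1}_{\pi^*D}$ with graded pieces $p^*Q^{i,a}_D\otimes\Omega^b_{\pi^*D_i/D_i}$ when $s_i\ge 1$. Already for $s_i=1$ and $r-1=1$ one computes that $\mathrm{gr}^0$ of the induced filtration on $B_1\Omega^1_{\pi^*D_i}$ is the sheaf of \emph{relative} exact $1$-forms $d_{\rm rel}\sO_{\pi^*D_i}$, not $p^*B_1\Omega^0_{D_i}\otimes\Omega^1_{\pi^*D_i/D_i}=0$; hence $\mathrm{gr}^0Q^{i,1}_{\pi^*D}$ is $\Omega^1_{\pi^*D_i/D_i}/B_1^{\rm rel}$ rather than $p^*Q^{i,0}_D\otimes\Omega^1_{\pi^*D_i/D_i}$. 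The absolute Frobenius defining $B_{s_i}$ does not interact with the Koszul filtration the way you need.

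The paper's fix is simpler and avoids this entirely: use the defining short exact sequence of $Q^{i,r-1}_D$ in \ref{para:KM-SNCDp} and the projection formula (the twisting line bundles are pulled back from $D_i$) to reduce to the projective bundle formula for $\Omega^{r-1}_{D_i}(\log E_i)$ and for $B_{s_i}\Omega^{r-1}_{D_i}(\log E_i)$ separately. The first is classical (weight filtration plus the non-log case), and the second is proven by descending induction on the form degree using the Cartier isomorphism and the sequences $0\to Z\to\omega\to B[+1]\to 0$ and $0\to B\to Z\to\omega'\to 0$; see \eqref{no2.25.2} and the proof of \eqref{no2.25.4}.
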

\begin{proof}
Write $D=\sum_{j=1}^n m_j D_j$ as in \ref{para:KM-SNCD}.
It suffices to show that $\cup \eta^t$ induces an isomorphism
$V_{r-t, X|D}\xr{\simeq}R^t\pi_*V_{r, \P|\pi^*D}$, for all $0\le t\le N$,
and $R^t\pi_*V_{r, \P|\pi^*D}=0$, for all $t> N$.
This is a local question and we may therefore assume $\P=\P^N_X$.
The corresponding statement for $U_{r, X|D+D_\red}$ holds by 
\cite[Theorem 2.28]{RS18} (see also \ref{para:err2}) below.
By Proposition \ref{prop:grV} (and Remark \ref{rmk:theta-func}) we are reduced to show:
\begin{enumerate}
\item $\cup\dlog(\eta^t): Q_{D}^{i, r-t-1}\to R^t\pi_* Q_{\pi^*D}^{i, r-1}$ is an isomorphism, for all $0\le t\le N$ 
and $1\le i\le n$;
\item $R^t\pi_* Q_{\pi^*D}^{i, r-1}=0$, for all $t>N$ and all $1\le i\le n$,
\end{enumerate}
where $Q_{D}^{i,r-1}$ is defined in \ref{para:KM-SNCDp}.
By the definition of $Q^{i,r-1}_D$ and the projection formula (e.g. \cite[Proposition (0.12.2.3)]{EGAIII}),
this follows from the corresponding statements
for $\Omega^{r-1}_{D_i}(\log E_i)$ and - if $p>0$ -  for $B_s\Omega^{r-1}_{D_i}(\log E_i)$
(where $E_i=D_{i+1}+\ldots+D_n$). These are well-known, cf. \eqref{no2.25.2} and 
the proof of \eqref{no2.25.4} (in the case char$(k)>0$) below.
\end{proof}

\begin{defn}\label{defn:proj-tr}
Let the notations and assumptions be as in Theorem \ref{thm:proj-pf-relK}.
We define 
\[\tr_\pi: R\pi_* V_{N+r,\P|\pi^*D}[N]\to V_{r, X|D}\]
to be the composition of the natural map $R\pi_* V_{N+r,\P|\pi^*D}[N]\to R^N\pi_* V_{N+r,\P|\pi^*D}$
followed by the inverse of the isomorphism $-\cup \eta^N: V_{r, X|D} \xr{\simeq} R^N\pi_* V_{N+r,\P|\pi^*D}$.
\end{defn}

\subsection{Small erratum to {\cite{RS18}}}
We use the opportunity to slightly correct an assumption and a proof from \cite{RS18},
see \ref{para:err1} and \ref{para:err2} below.
Please note that these corrections do not have any effect on the rest of {\em loc. cit.},
in particular not on any of the main results.

\begin{para}\label{para:err1}
In \cite[2.1.4]{RS18} we consider the following situation:
Let $X$ be a smooth and separated scheme over  $k$.
Let $\sum_{\lambda\in \Lambda} D_\lambda$
be a simple normal crossing divisor (with smooth components $D_\lambda$).
For $\fm=(m_\lambda)_{\lambda\in \Lambda}\in \N^{|\Lambda|}$
we define $D_\fm=\sum_\lambda m_\lambda D_\lambda$ and set for $\nu\in \Lambda$
\[\gr^{\nu}_{D_\fm} K^M_{r, X}:= U_{r, X|D_\fm}/U_{r, X|D_{\fm+\delta_\nu}},\]
where $\delta_\nu=(0, \ldots, 0, 1, 0,\ldots ,0)$ with $1$ in the  $\nu$'s place.
(In {\em loc. cit.} $U_{r, X|D}$ is denoted by $\sK^M_{r, X|D}$.)
In the Propositions 2.14, 2.15, Theorem 2.19, and Corollary 2.20 of \cite{RS18} 
we give formulas for $\gr^{\nu}_{D_\fm}K^M_{r,X}$.
In {\em loc. cit.} we allow also some of the $m_\lambda$ to be zero and in this case the formulas are not correct as written.
However, if  $m_\lambda\ge 1$, for all $\lambda\in \Lambda$, these formulas are correct and this is the only case used
in the rest of \cite{RS18}. (The formulas are used to understand the difference between 
$U_{r, X|D}$ and $U_{r, X|D_\red}$; the latter group can then be studied using 
Corollary 2.10,  which is correct as written, i.e., the $m_\lambda$ may be zero.)
\end{para}

\begin{para}\label{para:err2}
Let the notation and assumptions be as in \cite[Lemma 2.25]{RS18}.
We correct the proof of the isomorphisms 
\eq{no2.25.1}{-\cup \dlog(c_1(\sO(1))^i): \omega^{q-i}_{X|D, \fm,\nu}/B^{q-i}_{X|D, r,\fm,\nu}\xr{\simeq}
R^i\pi_{D_\nu*} (\omega^q_{P_X|P_D,\fm,\nu}/B^q_{P_X|P_D, r,\fm,\nu}),}
for $i\ge 0$. To this end replace on page 1013 everything after  
``Now we prove the statement for $\omega^q_{\fm ,\nu}$'' in 
line 8 until ``\ldots from Lemma 2.26 below.'' in line 25 by the following: 

We have a well-known isomorphism 
\eq{no2.25.2}{-\cup \dlog(c_1(\sO(1))^i): \Omega^{q-i}_X(\log D)\xr{\simeq}R^i\pi_{X*} \Omega^q_{P_X}(\log P_D).}
(Without log-poles this can, e.g., be deduced from \cite[Exp XI, Thm 1.1]{SGA7II}; the case with log-poles then 
follows by considering the degree-wise weight filtration. More precisely, let $w_j=w_j\Omega^q_X(\log D)$ be the $\sO_X$-submodule of $\Omega^q_X(\log D)$
whose sections have poles along at most $j$ different components of $D$. The Poincar\'e residue induces an isomorphism 
$w_j/w_{j-1}\cong \Omega^{q-j}_{D^j}$, where $D^j=\sqcup_{i_1<\ldots<i_j} D_{i_1}\cap\ldots\cap D_{i_j}$, 
cf. \cite[(3.1.5.2)]{HodgeII}. This isomorphism is compatible with the pullback to $P_X$ and we can deduce the  isomorphism \eqref{no2.25.2}.)
Using projection formula and base-change \eqref{no2.25.2} yields an isomorphism
\eq{no2.25.3}{-\cup \dlog(c_1(\sO(1))): \omega^{q-i}_{X|D,\fm,\nu}
\xr{\simeq}R^i\pi_{X*} \omega^q_{P_X|P_D,\fm,\nu},  \quad i\ge 0.}
It remains to show
\eq{no2.25.4}{-\cup \dlog(c_1(\sO(1))): B^{q-i}_{X|D, r, \fm,\nu}
\xr{\simeq}R^i\pi_{X*} B^q_{P_X|P_D,r, \fm,\nu}, \quad i\ge 0.}
By a limit argument we can reduce to the case that $k$ is finitely generated over its prime field,
so that $\Omega^q_X(\log D)=0$, for $q$ large enough.

{\em First case: ${\rm char}(k)=0$.} 
In this case the statement follows by descending induction on $q$ from the exact sequence
\eq{no2.25.5}{0\to Z^{q}_{X|D, \fm,\nu}\to \omega^q_{X|D,\fm ,\nu}\to B^{q+1}_{X|D,\fm,\nu}\to 0,}
the equality $Z^{q}_{X|D, \fm,\nu}=B^{q}_{X|D, \fm,\nu}$ (see \cite[Lemma 6.2]{BS19}),  and  \eqref{no2.25.3}.
(Lemma 2.26 is not  needed.)

{\em Second case: ${\rm char}(k)=p$.} 
For $r=1$ the statement follows by descending induction on $q$ from the 
exact sequence \eqref{no2.25.5} together with the exact sequence 
\[0\to B^{q}_{X|D,\fm,\nu}\to Z^{q}_{X|D,\fm,\nu}\to \omega^q_{\fm',\nu}\to 0,\]
coming from the Cartier isomorphism \cite[Theorem 2.16]{RS18}. 
For higher $r$ it follows from the recursive definition of $B^q_{X|D, r,\fm,\nu}$ in \cite[2.4.4]{RS18}.
\end{para}

\subsection{Comparison of different relative Milnor {$K$}-sheaves}\label{subsec:variousK}
We will have to use results from \cite{KS-GCFT} in which a slightly different version of the relative Milnor 
$K$-theory is used. Here we observe that these different versions give rise to the same top degree Nisnevich cohomology,
so that the difference won't play a role for us.

\begin{para}\label{para:variousK}
Let $X$ be a $k$-scheme and $D\subset X$ a nowhere dense closed subscheme. 
Denote by 
\[K^{M,\na}_{r, X}=(\sO_X^{\times})^{\otimes_{\Z} n}/J,\]
the naive Milnor $K$-sheaf on $X_\Nis$, where $J\subset (\sO_X^{\times})^{\otimes_{\Z} r}$ 
denotes the abelian subsheaf locally generated by elements 
of the form $a_1\otimes\ldots\otimes a_r$,  all $a_i\in \sO_X^\times$, and $a_i+a_j=1$, for some $i\neq j$.
By \cite[Proposition 10 and Theorem 13]{Kerz} there is a surjective map
\eq{para:various1}{K^{M,\na}_{r, X}\surj K^M_{r, X}}
and it is an isomorphism at the stalks $x$ with infinite residue field $k(x)$.
In \cite[(1.3)]{KS-GCFT} the relative Milnor $K$-sheaf of $(X, D)$ is defined by the formula
\eq{para:various2}{K^{M}_r(\sO_X, I):= \Ker(K^{M,\na}_{r, X}\to i_*K^{M,\na}_{r,D}),}
where $I$ is the ideal sheaf of $D$ and $i: D\inj X$ denotes the closed immersion. 
\end{para} 

\begin{lem}\label{lem:variousK}
Let the notations and assumptions be as  in \ref{para:variousK}.
Assume $X$ is noetherian, reduced, and of pure dimension $d<\infty$.
Then \eqref{para:various1} induces a surjection
\eq{lem:variousK1}{K^{M}_r(\sO_X, I)\surj V_{r, X|D},}
and the induced map on top-degree Nisnevich cohomology is an isomorphism
\eq{lem:variousK2}{H^d(X_{\Nis}, K^{M}_r(\sO_X, I))\xr{\simeq} H^d(X_\Nis, V_{r, X|D}).}
\end{lem}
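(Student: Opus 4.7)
My plan is to prove parts \eqref{lem:variousK1} and \eqref{lem:variousK2} in sequence, relying on an explicit Nisnevich-local description of the generators of $K^M_r(\sO_X,I)$.

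\textbf{Part \eqref{lem:variousK1}.} I would define the map by restricting the Kerz surjection \eqref{para:various1} to the subsheaf $K^M_r(\sO_X,I)\subset K^{M,\na}_{r,X}$, then check on Nisnevich-local generators that the image lies in, and exhausts, $V_{r,X|D}$. At a henselian local stalk $\sO^h_{X,x}$ with ideal $I^h$, Lemma~1.3.1 of \cite{KS-GCFT} identifies $K^M_r(\sO^h_{X,x},I^h)$ with the subgroup generated by naive symbols of the form $\{u,a_1,\dots,a_{r-1}\}^{\na}$ with $u\in 1+I^h$ and $a_i\in(\sO^h_{X,x})^\times$. These map under \eqref{para:various1} to the symbols $\{u,a_1,\dots,a_{r-1}\}\in K^M_{r,X}$, which are by definition generators of $V_{r,X|D}$, so both well-definedness and surjectivity follow at once.

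\textbf{Part \eqref{lem:variousK2}.} Let $N:=\Ker\bigl(K^M_r(\sO_X,I)\surj V_{r,X|D}\bigr)$. Since the Nisnevich cohomological dimension of $X$ is at most $d$, the long exact cohomology sequence already gives surjectivity in \eqref{lem:variousK2}, and the claim reduces to proving $H^d_\Nis(X,N)=0$. The key observation is that $N$ embeds into $\Ker\bigl(\eqref{para:various1}\bigr)$, which vanishes at every Nisnevich stalk with infinite residue field by \cite[Proposition 10(5)]{Kerz}. Under the hypotheses on $X$ (noetherian, reduced, pure dimension $d$), every non-closed point $y$ is the generic point of an irreducible closed subset of positive dimension, so $k(y)$ is the function field of a positive-dimensional integral scheme and is therefore infinite. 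Hence $N$ is supported only at the closed points of $X$.

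\textbf{The main obstacle.} The crux is to show that any Nisnevich sheaf $\sF$ on $X$ with $\sF_y=0$ at every non-closed $y$ satisfies $H^d_\Nis(X,\sF)=0$ when $d\ge 1$. My approach: by noetherianness, every local section of $\sF$ has finite support consisting of closed points, so $\sF=\colim_\alpha \sF_\alpha$ is a filtered colimit of subsheaves $\sF_\alpha$, each supported on a finite set $\alpha$ of closed points. Each $\sF_\alpha$ admits a finite filtration with skyscraper quotients $i_{x,*}A$, obtained by iteratively applying the localization sequence $0\to j_!j^*\to \id\to i_{x,*}i_x^*\to 0$ to peel off one closed point at a time. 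Skyscrapers are acyclic in positive degrees, so induction on the filtration length gives $H^d_\Nis(X,\sF_\alpha)=0$, and passing to the filtered colimit (which commutes with cohomology on a noetherian site) yields $H^d_\Nis(X,\sF)=0$. The boundary case $d=0$ is immediate: then $D$ must be empty since it is closed and nowhere dense in a $0$-dimensional scheme, and \eqref{lem:variousK1} reduces to the Kerz isomorphism for Milnor $K$-theory of fields.
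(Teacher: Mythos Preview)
Your argument is correct. Part \eqref{lem:variousK1} matches the paper's proof exactly (both invoke \cite[Lemma 1.3.1]{KS-GCFT}). For Part \eqref{lem:variousK2} the overall structure agrees with the paper---show that the kernel $N$ of \eqref{lem:variousK1} has restricted support and deduce $H^d_\Nis(X,N)=0$---but the execution differs slightly. The paper uses only that \eqref{para:various1} is an isomorphism on \emph{fields} (\cite[Proposition 10(4)]{Kerz}); since $X$ is reduced, the Nisnevich stalks at generic points are fields, so $N$ is supported in codimension $\ge 1$, and Grothendieck--Nisnevich vanishing finishes immediately. You instead invoke the stronger input that \eqref{para:various1} is an isomorphism at all stalks with infinite residue field, combine it with the (correct) observation that non-closed points of a noetherian scheme have infinite residue fields, and conclude that $N$ is supported on closed points; you then unwind the vanishing by hand via skyscrapers. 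Your route proves more than needed and is longer, but it is self-contained and avoids appealing to the coniveau/support machinery implicit in the paper's one-line invocation of Grothendieck--Nisnevich vanishing for sheaves supported in positive codimension.
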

\begin{proof}
The surjection in \eqref{lem:variousK1} holds by \cite[Lemma 1.3.1]{KS-GCFT}.
By \cite[Proposition 10(4)]{Kerz} the map \eqref{para:various1} is an isomorphism on fields.
Hence \eqref{lem:variousK1} has kernel supported in codimension $\ge 1$.
The isomorphism in \eqref{lem:variousK2} follows therefore from Grothendieck-Nisnevich vanishing.
\end{proof}

\section{The cylce class map}\label{cycle}
In this section $k$ denotes a field.  For a scheme $Z$ we denote by $Z_{(i)}$ (resp. $Z^{(i)}$) the set 
of points of $Z$ whose closure have (co)dimension $i$.

\begin{para}\label{para:CHM}
Let $X$ be an equidimensional $k$-scheme of finite type  and $D$ an effective Cartier divisor on $X$ such that 
$U=X\setminus |D|$ is smooth over $k$. Set $d=\dim X$.
For $C\subset X$ an integral curve  not contained in the support of $D$ 
and  with normalization $\nu:\tC\to C$, we set
\begin{equation}\label{para:CHM1}
G(C,D):= \bigcap_{x\in \tC\cap \nu^{-1}(D)} \Ker\big(\sO_{\tC,x}^\times 
\to \sO_{\tC\times_C D,x}^\times\big)\;\subset k(C)^\times.
\end{equation}
We define a map
\[  \partial_C: G(C, D) \rmapo{\div_{\tC}} Z_0(\tC) \rmapo{\nu_*} Z_0(U),\]
and put
\[ \CH_0(X|D)=\Coker\left(\partial=\sum_C\partial_C :
\underset{C}{\bigoplus}\;G(C,D) \to Z_0(U)\right),\]
where the sum is over all $C\subset X$ as above and $Z_0(Y)$ denotes the group of zero-cycles on $Y$.

For a closed point $x\in U$ the Gersten resolution (\cite[Proposition 10(8)]{Kerz})  yields an isomorphism 
\eq{para:CHM2}{\theta_x: \Z\xr{\simeq} H^d_x(U_{\Nis}, K^M_{d, U})
\cong H^d_x(X_{\Nis}, V_{d, X|D}),}
where we use the notation from \ref{para:KM}.
By \cite[Theorem 2.5]{KS-GCFT} and Lemma \ref{lem:variousK} we obtain a surjective map
\eq{para:CHM3}{\theta=\sum_x \theta_x: Z_0(U)=\bigoplus_{x\in U_{(0)}} \Z
\surj H^d(X_{\Nis}, V_{d, X|D}).}
Similarly, we can define a map $Z_0(U)\to H^d(X_{\Nis}, U_{d, X|D})$.
As a special case of \cite[Proposition 3.3]{RS18} this map factors via $\CH_0(X|D)$, if $X$ is smooth and 
$D$ has simple normal crossing support.
The following theorem is a refinement of this statement with $U_{d, X|D}$ replaced by $V_{d,X|D}$.
It implies Theorem \ref{intro:cycle-map}, by  \eqref{lem:variousK2}.
\end{para}

\begin{thm}\label{thm:cycle-map}
Let the notation and assumptions be as in \ref{para:CHM} above.
We assume additionally that  $X$ is smooth and that the support of $D$ is a simple normal crossing divisor.
Then $\eqref{para:CHM3}$ factors to give a surjective  map
\[\CH_0(X|D)\surj H^d(X_{\Nis}, V_{d, X|D}).\]
\end{thm}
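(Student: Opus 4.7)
The plan is to imitate \cite[Proposition 3.3]{RS18}, where the analogous factorization for $U_{d,X|D}$ is constructed, substituting $V_{d,X|D}$ wherever $U_{d,X|D}$ appears. This substitution is legitimate because the two ingredients one needs from \emph{loc.\ cit.}---a projective bundle formula and the resulting trace map---have now been established for $V$ in Theorem \ref{thm:proj-pf-relK} and Definition \ref{defn:proj-tr}, in a form parallel to the one used in \cite{RS18}. Surjectivity of $\CH_0(X|D) \to H^d(X_\Nis, V_{d,X|D})$ will then be inherited from the surjectivity of $\theta$ in \eqref{para:CHM3}.

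Concretely, I would fix an integral curve $C \subset X$ with $C \not\subset |D|$, its normalization $\nu:\tilde C \to X$, and an element $f \in G(C,D) \subset k(C)^\times$; the goal is to show $\theta(\nu_* \div_{\tilde C}(f)) = 0$ in $H^d(X_\Nis, V_{d,X|D})$. After a moving/blow-up argument on $\tilde C$ one may assume $\tilde C$ embeds compatibly in a projective bundle $\pi:\P = \P^N_X \to X$, so that the trace map $\tr_\pi:R\pi_* V_{d+N, \P|\pi^*D}[N] \to V_{d,X|D}$ can be used to transport cohomology classes from $\P$ back to $X$. This reduces the vanishing in question to a reciprocity statement on the proper curve $\tilde C$, which in turn follows from the Gersten resolution of $K^M_d$ on smooth varieties together with the injection $V_{d,X|D} \hookrightarrow K^M_{d,X}$ (\cite[Proposition 10(8)]{Kerz}).

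The crucial observation is that the modulus condition built into $G(C,D)$---namely $f \equiv 1$ modulo $I_D \cdot \sO_{\tilde C, x}$ at each $x \in \tilde C \cap \nu^{-1}(D)$---is precisely what forces a local symbol of the form $\{f, g_2, \ldots, g_d\}$, with $g_2, \ldots, g_d$ local parameters on $X$ transverse to $\tilde C$ near such a point, to lie in $V_{d,X|D}$ and not merely in the generic $K^M_d$: the factor $f$ enters through $\sO_{X|D}^\times$, exactly matching the definition of $V_{d,X|D}$ in \ref{para:KM}. The Gersten boundary of this local symbol recovers $\nu_*\div_{\tilde C}(f)$ as a zero-cycle, witnessing that $\theta(\nu_*\div_{\tilde C}(f))$ is the boundary of an element of $V_{d-1}$-data and hence dies in $H^d(X_\Nis, V_{d,X|D})$.

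The main obstacle is bookkeeping: one must verify that the explicit symbol construction and its compatibility with $\tr_\pi$ (together with the reductions via moving and compactification) go through verbatim when $U$ is replaced by $V$, and that no step in \cite{RS18} implicitly uses a property of $U_{d,X|D}$ beyond its projective bundle formula. The formal parallelism between the statements Theorem \ref{thm:proj-pf-relK} / Definition \ref{defn:proj-tr} here and their counterparts for $U$ in \cite{RS18}, together with Remark \ref{rmk:theta-func} for functoriality under smooth pullback, strongly suggests that no genuinely new geometric input is needed beyond what has been assembled in Section \ref{pbf}.
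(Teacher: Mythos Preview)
Your proposal is correct and follows essentially the same route as the paper's proof: embed the normalization $\tilde C$ into $\P^N_X$, use the trace from Definition \ref{defn:proj-tr} to reduce to the case of a regular curve inside a smooth ambient variety, and then observe via the cohomological \v{C}ech symbol (cf.\ \ref{para:coh-symb}) that the modulus condition on $f$ lets one lift $\tilde f$ into $\sO_{X|D,x}^\times$, so that $\{\tilde f, s_1,\ldots,s_{d-1}\}$ lands in $V_{d,X|D}$ on the locus where the $s_j$ are inverted. The paper organizes this by first isolating the regular-curve case as a separate claim and by replacing the Cousin resolution of \cite{RS18} with the coniveau spectral sequence directly, but the substance is the same.

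A couple of inaccuracies to clean up: no moving, blow-up, or compactification is needed---$\tilde C$ is quasi-projective over $k$ and hence embeds in some $\P^N_X$ outright---and $\tilde C$ is not assumed proper, nor is any reciprocity law on a proper curve invoked. The vanishing of $\theta(\nu_*\div_{\tilde C}(f))$ comes purely from exhibiting a preimage in $H^{d-1}_{c}$ for the coniveau boundary map, as you indicate in your third paragraph.
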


See the introduction for a discussion on under which  assumptions on $X, D, k$ the theorem was previously known (in view of \eqref{lem:variousK2}).
Before we can prove the theorem, we need to recall some notations and results from \cite{RS18}.
\begin{para}\label{para:coh-symb}
Let $Y=\Spec A$ be an affine scheme. Let $s_1,\ldots, s_c\in A$  and set  $Z=\Spec A/(s_1,\ldots, s_c)$.
Then $\fV=\{V_1,\ldots, V_c\}$ with $V_i=\Spec A[\frac{1}{s_i}]$ is an open covering of $Y\setminus Z$.
Let $F$ be a sheaf of abelian groups on $Y_\Nis$ and denote by $C^\bullet(\fV, F)$ the \v{C}ech complex of $F$ with respect to
$\sV$. We obtain natural maps
\ml{para:coh-symb1}{F(V_1\cap\ldots\cap V_c)=C^{c-1}(\fV, F) \to H^{c-1}(C^\bullet(\fV, F))\\
\to 
H^{c-1}((Y\setminus Z)_{\Zar}, F)\to H^{c-1}((Y\setminus Z)_{\Nis}, F)\to
H^c_Z(Y_\Nis, F),}
where the last map is the boundary map of the localization sequence.
For $a\in F(V_1\cap\ldots\cap V_c)$ we denote by 
\[\genfrac{[}{]}{0pt}{}{a}{s_1,\ldots, s_c}\in H^c_Z(Y_\Nis, F)\]
the image of $a$ under \eqref{para:coh-symb1}.
We note the following two obvious functoriality properties of this symbol:
\begin{enumerate}[label=(\arabic*)]
\item\label{para:coh-symb2} if $h: F\to G$ is a morphism of sheaves then
\[h\left(\genfrac{[}{]}{0pt}{}{a}{s_1,\ldots, s_c}\right)=\genfrac{[}{]}{0pt}{}{h(a)}{s_1,\ldots, s_c}\quad 
\text{in } H^c_Z(Y_\Nis, G);\]
\item\label{para:coh-symb3} if $j: U\inj Y$ is an affine open immersion, then 
\[j^*\left(\genfrac{[}{]}{0pt}{}{a}{s_1,\ldots, s_c}\right)=\genfrac{[}{]}{0pt}{}{j^*a}{j^*s_1,\ldots, j^*s_c}\quad 
\text{in } H^c_{Z\cap U}(U_\Nis, F_{|U}).\]
\end{enumerate}

Now assume $Y$ is a regular $k$-scheme of pure dimension $d$ and $y\in Y^{(d-1)}$.
In this case the Gersten complex of $K^M_{d,Y}$ is a flasque resolution and we obtain an isomorphism
\eq{para:coh-symb4}{k(y)^\times\xr{\simeq}H^{d-1}_y(Y_\Zar, K^M_{Y,d})=H^{d-1}_y(Y_\Nis, K^M_{Y,d})=
H^{d-1}_y(Y_{(y), \Nis}, K^M_{Y,d}), }
where $Y_{(y)}=\Spec \sO_{Y,y}$. 
Let $s_1,\ldots, s_{d-1}\in \sO_{Y,y}$ be a regular sequence of parameters. 
Under the isomorphism \eqref{para:coh-symb4} a function  $f\in k(y)^\times$ is mapped  to the symbol
\[\pm\genfrac{[}{]}{0pt}{}{\{\tilde{f}, s_1,\ldots, s_{d-1}\}}{s_1,\ldots, s_{d-1}},\]
where $\tilde{f}\in \sO_{Y,y}$ is any lift of $f$, see \cite[Corollary 2.3]{RS18}.
\footnote{
The sign depends on the choice of sign in the definition of the tame symbols which appear in the Gersten resolution.}
(To see this one considers the \v{C}ech complex of the Gersten resolution of $K^M_{Y,d}$. 
This yields a double complex whose associated complex  comes 
with  natural augmentation maps from the Gersten complex and  the \v{C}ech complex of $K^M_{Y,d}$, 
which both are quasi-isomorphisms. If one follows the image of $f$ under the induced isomorphisms on cohomology
one obtains the above description, for details see {\em loc. cit.})
\end{para}


\begin{proof}[Proof of Theorem \ref{thm:cycle-map}.]
The proof is similar to \cite[Proposition 3.3]{RS18}, though we will need the  projective bundle formula
for $V_{r, X|D}$ proved in the previous section and we take a short cut around the Cousin resolution used in 
{\em loc. cit.}

We have the spectral sequence
\[E^{i,j}_1 =\bigoplus_{x\in X^{(i)}} H^{i+j}_x(X_\Nis, V_{d, X|D}) 
\Longrightarrow H^{i+j}(X_{\Nis}, V_{d, X|D}).\]
Grothendieck vanishing (\cite[Corollary 1.3.3]{Nis}) yields 
\ml{thm:cycle-map0}{H^d(X_{\Nis}, V_{d, X|D})= E^{d,0}_2\\
= \Coker\left(\bigoplus_{x\in X_{(1)}} H^{d-1}_x(X_{\Nis},V_{d, X|D})\xr{\partial} 
\bigoplus_{x\in X_{(0)}} H^d_x(X_{\Nis}, V_{d, X|D})\right). }

\begin{claim}\label{thm:cycle-map-claim} 
Assume $C\subset X$ is an integral regular curve not contained in $D$ and $f\in k(C)^\times$ with 
$f\equiv 1$ mod $D_{|C}$. Then
\[\div(f)\in \bigoplus_{x\in U_{(0)}} \Z \subset \bigoplus_{x\in X_{(0)}} H^d_x(X_{\Nis}, V_{d, X|D}),\]
lies in the image of 
\eq{prop:cyle-map1}{\partial=(\partial_x)_x : k(C)^\times\cong H^{d-1}_c(X_{\Nis}, V_{d, X|D}) 
\to \bigoplus_{x\in X_{(0)}} H^d_x(X_{\Nis}, V_{d, X|D}),}
where $c$ is the generic point of $C$. 
\end{claim}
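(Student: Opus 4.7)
The plan is to follow the strategy of \cite[Proposition 3.3]{RS18}, but replacing the Cousin resolution used there by direct symbol calculus as in \ref{para:coh-symb} together with the projective bundle formula (Theorem \ref{thm:proj-pf-relK}) and its associated trace (Definition \ref{defn:proj-tr}). First I would unwind the isomorphism $k(C)^\times\cong H^{d-1}_c(X_\Nis,V_{d,X|D})$ in \eqref{prop:cyle-map1}. Since $C\not\subset |D|$, the generic point $c$ of $C$ lies in $U$, so $V_{d,X|D}$ agrees with $K^M_{d,X}$ in a Nisnevich neighborhood of $c$, and the Gersten resolution (plus Lemma \ref{lem:variousK}) identifies $H^{d-1}_c$ with $k(c)^\times = k(C)^\times$ via the symbol
\[
f\;\longmapsto\;\pm\genfrac{[}{]}{0pt}{}{\{\tilde f,s_1,\ldots,s_{d-1}\}}{s_1,\ldots,s_{d-1}},
\]
where $s_1,\ldots,s_{d-1}\in\sO_{X,c}$ is any regular system of parameters (necessarily cutting out $C$ locally) and $\tilde f\in\sO_{X,c}$ is any lift of $f$.

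Second, for a closed point $x$ in the closure $C$, the boundary map to $H^d_x$ is computed by extending the regular sequence by a uniformizer $s_d$ of $C$ at $x$, yielding
\[
\partial_x(f)\;=\;\pm\genfrac{[}{]}{0pt}{}{\{\tilde f,s_1,\ldots,s_{d-1}\}}{s_1,\ldots,s_{d-1},s_d}\in H^d_x(X_\Nis,V_{d,X|D}).
\]
I would then separate the contributions according to whether $x\in U$ or $x\in |D|\cap C$. For $x\in U\cap C$, a Nisnevich neighborhood of $x$ is disjoint from $|D|$, so $V_{d,X|D}=K^M_{d,X}$ there, and the classical Gersten computation (\cite[Corollary 2.3]{RS18}, cf.\ the description after \eqref{para:coh-symb4}) combined with \eqref{para:CHM2} identifies this symbol with $\mathrm{ord}_x(f)\cdot\theta_x(1)$, recovering precisely the contribution of $x$ to $\div(f)\in Z_0(U)$.

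For $x\in |D|\cap C$, the key point is to show $\partial_x(f)=0$. The modulus condition $f\equiv 1\bmod D_{|C}$ allows the lift $\tilde f$ to be chosen so that $\tilde f-1$ is divisible by a local equation of $D$ at $x$; this ensures the numerator $\{\tilde f,s_1,\ldots,s_{d-1}\}$ is actually a section of $V_{d,X|D}$. To conclude the full symbol vanishes in $H^d_x$, I would apply Theorem \ref{thm:proj-pf-relK} and Definition \ref{defn:proj-tr} to embed the local picture into a projective bundle, push forward via the trace, and thereby reduce to a standard local model in which $C$ is a coordinate line and $D$ is a union of coordinate hyperplanes. In that model one manipulates the symbol using the identities of Lemma \ref{lem:lcI} together with the explicit description of $V_{d,X|D}$ as the image of $\sO_{X|D}^\times\otimes K^M_{d-1,X}$, realizing the class as the restriction of a section on an open neighborhood of $x$, whence it vanishes in $H^d_x$.

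The hard part will be precisely this last step: exhibiting the vanishing $\partial_x(f)=0$ at $x\in |D|\cap C$, especially when $C$ meets $|D|$ non-transversally. The projective bundle formula is essential here because it lets us bypass direct case-analysis on the local geometry of $C\cap|D|$ by reducing to a universal model where the symbol manipulations via Lemma \ref{lem:lcI} become routine. Once this vanishing is established, summing the contributions $\partial_x(f)$ for $x\in C$ yields $\div(f)\in Z_0(U)\subset\bigoplus_{x\in X_{(0)}}H^d_x(X_\Nis,V_{d,X|D})$, as claimed; combined with \eqref{thm:cycle-map0} this forces $\partial\big(\bigoplus_C G(C,D)\big)$ to kill exactly the subgroup of $Z_0(U)$ whose quotient defines $\CH_0(X|D)$, completing the proof of Theorem \ref{thm:cycle-map}.
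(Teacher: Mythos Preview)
Your symbol-calculus setup and the treatment of points $x\in U\cap C$ are correct, but your proposed method for the vanishing $\partial_x(f)=0$ at $x\in|D|\cap C$ misplaces the projective bundle formula. Theorem \ref{thm:proj-pf-relK} and the trace $\tr_\pi$ are not used in the proof of the claim at all; they enter only \emph{after} the claim, in the proof of Theorem \ref{thm:cycle-map}, to pass from an \emph{arbitrary} integral curve to a regular one by embedding the normalization $\tC$ into $\P^n_X$. Your suggestion to ``embed the local picture into a projective bundle'' in order to reduce to a model where $C$ is a coordinate line does not make sense as stated: $\tr_\pi$ compares cohomology on $\P^n_X$ with cohomology on $X$; it does not alter the local geometry of a given curve $C\subset X$.

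The actual argument for $\partial_x(f)=0$ is much shorter and uses directly the hypothesis that $C$ is \emph{regular}. Since $C$ and $X$ are regular, the closed immersion $C\hookrightarrow X$ is a regular immersion at $x$, so one may choose $s_1,\ldots,s_{d-1}$ to be a regular sequence in $\sO_{X,x}$ generating the ideal of $C$ at $x$ (not merely at the generic point $c$). The modulus condition $f\equiv 1\bmod D_{|C}$ lets one pick the lift $\tilde f\in\sO_{X|D,x}^\times$, whence $\{\tilde f,s_1,\ldots,s_{d-1}\}$ is a section of $V_{d,X|D}$ over $\Spec\sO_{X,x}[\tfrac{1}{s_1\cdots s_{d-1}}]$. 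By \ref{para:coh-symb}\ref{para:coh-symb2},\ref{para:coh-symb3} the symbol therefore defines a class in $H^{d-1}_{C_{(x)}}(\Spec\sO_{X,x},V_{d,X|D})$ restricting to the class of $f$ in $H^{d-1}_c$. The vanishing $\partial_x(f)=0$ is then immediate from the localization long exact sequence for the pair of supports $\{x\}\subset C_{(x)}$. No reduction to a standard local model and no use of Lemma \ref{lem:lcI} is required.
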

We prove the claim. 
We have $\partial(f)_{|U}= \pm \div_{C\cap U}(f)$ (e.g. \cite[(3.3.1)]{RS18}).
Thus it remains to show $\partial_x(f)=0$, for all $x\in D_{(0)}\cap C$.
To this end it suffices to show that $f$ lies in the image 
\eq{prop:cyle-map2}{H^{d-1}_{C_{(x)}}(\Spec \sO_{X,x}, V_{d, X|D}) \to H^{d-1}_c(X, V_{d, X|D}),}
for $x\in D_{(0)}\cap C$, where $C_{(x)}=\Spec \sO_{C,x}$. 
Under the isomorphism 
\[k(C)^\times \cong H^{d-1}_c(X, V_{d, X|D}) = H^{d-1}_c(X,K^M_{d,X})\]
$f$ corresponds  to (see \ref{para:coh-symb})
\eq{prop:cyle-map3}{\pm\genfrac{[}{]}{0pt}{}{\{\tilde{f}, s_1,\ldots, s_{d-1}\}}{s_1,\ldots, s_{d-1}},}
where $\tilde{f}\in \sO_{X,c}$ is a lift of $f$ and $s_1,\ldots, s_{d-1}\in \fm_c$ is a regular system of parameters.
In fact since $C$ and $X$ are  regular, the closed immersion  $C\inj X$ is  regular 
and hence we can choose the $s_i$'s
to be a regular sequence in $\sO_{X,x}$ generating the ideal sheaf of $C$ at $x$. Furthermore, since
$f\in \sO_{C|D_{|C},x}^\times$ by assumption,  we can choose the lift 
$\tilde{f}$ to lie in $\sO_{X|D,x}^\times$. 
Together with \ref{para:coh-symb}\ref{para:coh-symb2}, \ref{para:coh-symb3} 
this shows that \eqref{prop:cyle-map3} lies in the image of \eqref{prop:cyle-map2} 
 and proves Claim \ref{thm:cycle-map-claim}.

Assume $C\subset X$  is an arbitrary integral curve not contained in $D$. 
Let $\nu:\tC\to C$ be the normalization. We embed $\tC$ in $P_X=\P^n_X$ over $X$. 
Denote by $\pi:P_X\to X$ the projection and by $\pi_U$ its base change over $U$.
Consider the following diagram
\[\xymatrix{
Z_0(\tC\cap P_U)\ar[r]\ar[d]^{\nu_*} &
 H^{d+n}_{(P_{U})_{(0)}}(P_{U,\Nis}, K^M_{d+n, P_U})\ar[r]\ar[d]^{\tr_{\pi_{U}}}&
 H^{d+n}(P_{X,\Nis}, V_{d+n, P_X|P_D})\ar[d]^{\tr_\pi}\\
Z_0(C\cap U)\ar[r] & 
 H^{d}_{U_{(0)}}(U_{\Nis}, K^M_{d, U})\ar[r]&
H^d(X_\Nis, V_{d, X|D}),
}\]
where $\tr_{\pi}$ and $\tr_{\pi_U}$ are defined in \ref{defn:proj-tr} and  we view
$U_{(0)}$ and $(P_U)_{(0)}$ as families of closed supports on $U$ and $P_U$, respectively.
The left square commutes by \cite[Lemma 2.27]{RS18}, the right square by construction, and the
composite horizontal maps are equal to \eqref{para:CHM3}.
Since $\tC$ is regular Claim \ref{thm:cycle-map-claim} and \eqref{thm:cycle-map0} yield 
that $\div_{\tC}(f)$ is mapped to zero 
in $H^{d+n}(P_{X, \Nis}, V_{d+n, P_X|P_D})$, 
for $f\equiv 1$ mod $D_{|\tC}$. Hence
 $\div_C(f)=\nu_*\div_{\tC}(f)$ is  mapped to zero in $H^d(X_{\Nis}, V_{d, X|D})$.
This completes the proof.
\end{proof}

\begin{rmk}
The assumption that the support of $D$ is a simple normal crossing divisor, was only used
to apply the projective bundle formula, Theorem \ref{thm:proj-pf-relK}.
\end{rmk}

\providecommand{\bysame}{\leavevmode\hbox to3em{\hrulefill}\thinspace}
\providecommand{\MR}{\relax\ifhmode\unskip\space\fi MR }
\providecommand{\MRhref}[2]{%
  \href{http://www.ams.org/mathscinet-getitem?mr=#1}{#2}
}
\providecommand{\href}[2]{#2}

\end{document}